\newcommand{\arxiv}[1]{\href{http://arxiv.org/abs/#1}{\texttt{arXiv:#1}}}
\theoremstyle{plain}
\newtheorem{theorem}{Theorem}
\newtheorem{lemma}[theorem]{Lemma}
\newtheorem{corollary}[theorem]{Corollary}
\newtheorem{fact}[theorem]{Fact}
\theoremstyle{definition}
\newtheorem{conjecture}[theorem]{Conjecture}
\theoremstyle{remark}
\title{\bf Maximal antichains of minimum size}
\author{Thomas Kalinowski \and Uwe Leck \and Ian T. Roberts}
\begin{document}

\maketitle


\begin{abstract}
Let $n\geqslant 4$ be a natural number, and let $K$ be a set $K\subseteq [n]:=\{1,2,\dots,n\}$. We study the problem to find the smallest possible size of a maximal family $\mathcal{A}$ of subsets of $[n]$ such that $\mathcal{A}$ contains only sets whose size is in $K$, and $A\not\subseteq B$ for all $\{A,B\}\subseteq\mathcal{A}$, i.e. $\mathcal{A}$ is an antichain. We present a general construction of such antichains for sets $K$ containing 2, but not 1. If $3\in K$ our construction asymptotically yields the smallest possible size of such a family, up to an $o(n^2)$ error. We conjecture our construction to be asymptotically optimal also for $3\not\in K$, and we prove a weaker bound for the case $K=\{2,4\}$. Our asymptotic results are straightforward applications of the graph removal lemma to an equivalent reformulation of the problem in extremal graph theory which is interesting in its own right. 

  \bigskip\noindent \textbf{Keywords:} Extremal set theory, Sperner property, maximal antichains, flat antichains
\end{abstract}

\section{Introduction}\label{sec:intro}
Let $n\geqslant 2$ be an integer and $[n]:=\{1,2,\dots,n\}$. By $2^{[n]}$ we denote the family of all subsets of $[n]$ and by $\binom{[n]}k$ the family of all $k$-subsets of $[n]$. A family $\mathcal{A}\subseteq 2^{[n]}$ is an \emph{antichain} if $A\not\subseteq B$ for all distinct $A,B\in\mathcal{A}$. An antichain $\mathcal A$ is called \emph{flat} if it is contained in two consecutive levels of $2^{[n]}$, i.e. if $\mathcal A\subseteq\tbinom{[n]}{k}\cup\tbinom{[n]}{k+1}$ for some $k$. More generally, for a subset $K\subseteq[n]$, we call $\mathcal A$ a \emph{$K$-antichain} if it contains only $k$-sets with $k\in K$, i.e. 
\[\mathcal A=\bigcup\limits_{k\in K}\mathcal A_k,\] 
where $\mathcal A_k\subseteq\tbinom{[n]}{k}$. The \emph{dual} $\mathcal A^*$ of a family of sets $\mathcal A = \{A_1, \ldots, A_m\}$ is the collection $\mathcal A^* = \{ X_1, \ldots, X_n\}$ of subsets of $[m]$ given by $X_i = \{ j\in[m]\ :\ i \in A_j\}$ for $i\in[n]$. It is well known that a family of sets $\mathcal A$ is an antichain if and only if its dual $\mathcal A^*$ is a \emph{completely separating system} (CSS): a CSS is a collection $\mathcal C$ of blocks of $[n]$  such that for each $a,b \in [n]$ there are blocks  $A,B \in\mathcal C$ with $a \in A\setminus B$ and $ b \in B\setminus A$. The dual of a flat antichain on $\binom{[n]}{k} \cup \binom{[n]}{k-1}$  is a \emph{fair CSS}: a CSS in which each point occurs exactly $k$ or $k-1$ times. The consideration of minimum size CSSs led to a conjecture which subsequently became the \emph{Flat Antichain Theorem}, which follows from results of Lieby \cite{lieby1999extremal} (see also \cite{lieby2004antichains}) and Kisv{\"o}lcsey \cite{kisvolcsey2006flattening}. This theorem greatly reduced the computational search space for CSSs. It says that for every antichain $\mathcal A$ there is an equivalent flat antichain $\mathcal A'$, where the equivalence relation is defined by: $\mathcal A'$ is equivalent to $\mathcal A$  if and only if $\lvert\mathcal A'\rvert=\lvert\mathcal A\rvert$ and $\sum_{A\in\mathcal A'}\lvert A\rvert=\sum_{A\in\mathcal A}\lvert A\rvert$. Griggs et al. \cite{GriggsHartmannLeckRoberts2012} showed that the flat antichains minimize the \emph{BLYM-values} $\sum_{A\in\mathcal A}\tbinom{n}{\lvert A\rvert}^{-1}$ within their equivalence classes, and more generally, they minimize (maximize) $\sum_{A\in\mathcal A}w(\lvert A\rvert)$ for every convex (concave) function $w$. CSSs have applications in various areas including Coding Theory, Search Theory, and Topology. See~\cite{du2000combinatorial} and~\cite{gruttmuller2012antichains} for applications and further references for CSSs.

A $K$-antichain $\mathcal A$ is called a \emph{maximal $K$-antichain} if there is no $K$-antichain $\mathcal A'$ with $\mathcal A\subsetneq \mathcal A'$. For $K'\subseteq K$, any $K'$-antichain is also a $K$-antichain, and if it is a maximal $K$-antichain, then it is also a maximal $K'$-antichain. The converse is not true as can be seen by the family
\[\mathcal A=\{1245,2367,1389,16,17,28,29,34,35,46,47,48,49,56,57,58,59,68,69,78,79\}\]
for $n=9$, which is a maximal $\{2,4\}$-antichain but not a maximal $\{2,3,4\}$-antichain, since $\mathcal A\cup\{123\}$ is an antichain properly containing $\mathcal A$. This example is illustrated in Figure~\ref{fig:maximality}, showing a graph on 9 vertices whose edge set is the complement of $\mathcal A_2$ in $\tbinom{[9]}{2}$, while the set of 4-cliques is precisely $\mathcal A_4$.
\begin{figure}[htb]
  \centering
  \includegraphics[width=.4\linewidth]{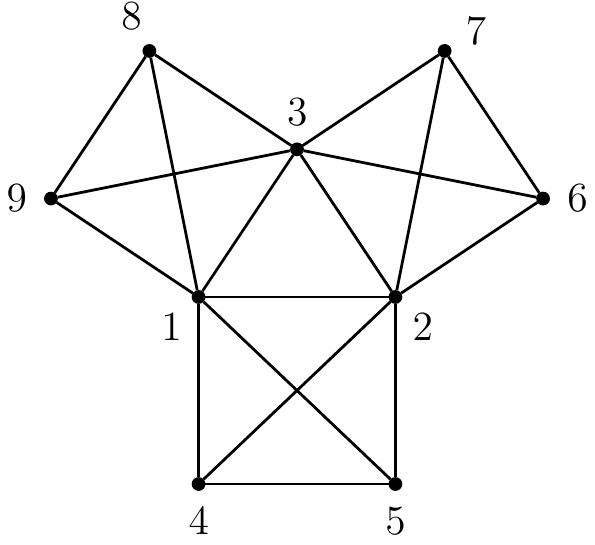}
  \caption{A graph representing a maximal $\{2,4\}$-antichain which is not maximal as a $\{2,3,4\}$-antichain, and thus also not strongly maximal.}
  \label{fig:maximality}
\end{figure}
We also call a $K$-antichain $\mathcal A$ \emph{strongly maximal} if it is maximal as an antichain, not just as $K$-antichain, i.e. if there is no antichain $\mathcal A'$ with $\mathcal A\subsetneq \mathcal A'$. Equivalently, $\mathcal A$ is a strongly maximal $K$-antichain if it is also a maximal $\{k,k+1,\ldots,l\}$-antichain, where $k$ and $l$ are the smallest and the largest element of $K$, respectively. In this paper, we always assume $1\not\in K$ and $2\in K$, and we study the problem of determining the smallest possible cardinality of a maximal $K$-antichain. 

Our approach to the minimum size of a maximal antichain is based on the graph interpretation which was used in~\cite{GruettmuellerHartmannKalinowskiLeckRoberts2009} to completely resolve the problem for the case $K=\{2,3\}$. To prove our asymptotic results we adapt arguments from~\cite{gerbner2011saturating} where lower bounds for the size of maximal $\{l,l+1\}$-antichains were proved. The main tool in this part is the graph removal lemma. 
 \begin{fact}[Graph removal lemma.] Let $H$ be a graph on $h$ vertices and let $\varepsilon>0$. Then there exists $\delta>0$ such that every graph $G$ on $n$ vertices containing at most $\delta n^h$ copies of $H$ can be made $H$-free by removing $\varepsilon n^2$ edges.   
 \end{fact}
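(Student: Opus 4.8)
The plan is to deduce the removal lemma from the Szemer\'edi regularity lemma together with an accompanying counting (embedding) lemma; since this is the standard graph removal lemma I only outline the argument. Fix $H$ on $h$ vertices and $\varepsilon>0$. First I would choose an auxiliary regularity parameter $\eta>0$ (small in terms of $\varepsilon$ and $h$, say $\eta\le\varepsilon/4$) and a density threshold $d>0$ (small in terms of $\varepsilon$), and apply the regularity lemma to obtain, for $n$ large, an $\eta$-regular equitable partition $V_1,\dots,V_t$ of $V(G)$ with $\max\{h,1/\eta\}\le t\le T$, where $T=T(\eta)$ is a constant independent of $n$.

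The second step is to \emph{clean} $G$: delete every edge lying inside some part $V_i$, every edge between a pair $(V_i,V_j)$ that fails to be $\eta$-regular, and every edge between a pair $(V_i,V_j)$ of density less than $d$. The first type contributes at most $t\cdot(n/t)^2/2=n^2/(2t)\le\eta n^2/2$ edges, the second at most $\eta\binom t2(n/t)^2\le\eta n^2/2$, and the third at most $d\binom t2(n/t)^2\le dn^2/2$, so choosing $\eta$ and $d$ small enough in terms of $\varepsilon$ makes the total at most $\varepsilon n^2$. Call the cleaned graph $G'$.

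Now suppose, for contradiction, that $G'$ still contains a copy of $H$. Since $n$ is large and the partition is equitable, such a copy may be taken to have its $h$ vertices in $h$ distinct parts $V_{i_1},\dots,V_{i_h}$, and by construction every pair $(V_{i_a},V_{i_b})$ corresponding to an edge of $H$ is $\eta$-regular of density at least $d$. The counting lemma then yields at least $c\,(n/t)^h$ copies of $H$ in $G$ with exactly one vertex in each $V_{i_a}$, for a constant $c=c(d,h)>0$, provided $\eta$ was chosen small enough relative to $d$ and $h$. As $t\le T$, this is at least $c'n^h$ for a constant $c'=c'(\varepsilon,h)>0$. Setting $\delta:=c'/2$ (and shrinking $\delta$ if necessary so that $\delta n^h<1$ below the threshold on $n$, where $G$ is already $H$-free) contradicts the hypothesis that $G$ has at most $\delta n^h$ copies of $H$. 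Hence $G'$ is $H$-free, which is what we wanted.

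The hard part is the counting lemma: one embeds $H$ greedily, vertex by vertex, across the surviving dense regular pairs, repeatedly invoking the definition of $\eta$-regularity to pass to the sub-family of candidate vertices having approximately the expected number of neighbours in each not-yet-used part, while controlling the error accumulated over the $h$ steps. This is exactly where one needs $\eta$ small compared with $d$ and $h$, and it is also why the cleaning must retain only $\eta$-regular pairs whose density is bounded away from $0$. The other point worth emphasising is that the regularity lemma bounds $t$ by a constant $T(\eta)$, which is precisely what keeps the resulting $\delta$ independent of $n$.
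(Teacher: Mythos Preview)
The paper does not prove this statement at all: it is quoted as a known fact, with references to Ruzsa--Szemer\'edi for the triangle case, Erd\H{o}s--Frankl--R\"odl for general $H$, and Fox for a regularity-free proof. Your outline is precisely the standard regularity-lemma route alluded to in those citations, so in that sense it is fully aligned with what the paper points to.

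One small imprecision worth flagging: after cleaning, a surviving copy of $H$ need not have its $h$ vertices in $h$ \emph{distinct} parts --- non-adjacent vertices of $H$ may well land in the same $V_i$, and ``$n$ large'' does not prevent this. The correct statement is that adjacent vertices of $H$ lie in distinct parts (since intra-part edges were deleted), so the copy induces a homomorphism from $H$ into the reduced graph, and the counting lemma in its homomorphism form (or a straightforward blow-up argument) then yields $\Omega((n/t)^h)$ copies. This is a routine fix and does not affect the validity of your sketch.
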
 
This was proved, using Szemer\'edis regularity lemma, first for $H$ being a triangle~\cite{RuszaSzemeredi1976} and later in general~\cite{ErdoesFranklRoedl1986}. See~\cite{Fox2010} for a recent proof avoiding the regularity lemma, and thus improving the bounds on $\delta$ significantly. 

The paper is organized as follows. In Section~\ref{sec:graph} the problem is reformulated as a question in extremal graph theory. In Section~\ref{sec:construction} we describe a construction for graphs, which we believe to correspond to optimal antichains. Some evidence for this conjecture is provided by exhaustive search results for $K=\{2,4\}$ and small $n$ which are presented in Section~\ref{sec:small_n}. Section~\ref{sec:asymptotics} contains our asymptotic results: if $3\in K$  our construction yields the correct leading term, and for $K=\{2,4\}$ we prove a first nontrivial bound. Finally, in Section~\ref{sec:problems} we suggest some open problems for further investigations. 

\section{Graph theoretical reformulation}\label{sec:graph}
As proposed in~\cite{GruettmuellerHartmannKalinowskiLeckRoberts2009}, we associate a graph $G(\mathcal A)=(V,E)$ with an antichain $\mathcal A$. The vertex set is $V=[n]$ and the edge set
\[E=\binom{[n]}{2}\setminus\mathcal A_2.\]
We start by making some simple observations.
\begin{lemma}\label{lem:simple_observations}
Let $\mathcal A$ be a $K$-antichain on $n$ points and let $G=(V,E)$ be the associated graph.
\begin{enumerate}
\item Every $B\in\mathcal A\setminus\mathcal A_2$ induces a clique in $G$.
\item If $\mathcal A$ is a maximal $K$-antichain and $A$ is the vertex set of a $k$-clique in $G$ for some $k\in K$ then there is an $A'\in\mathcal A$ with $A\subseteq A'$ or $A\supseteq A'$. In particular, every edge of $G$ is contained in a $k$-clique for some $k\in K\setminus\{2\}$.
\end{enumerate}
\end{lemma}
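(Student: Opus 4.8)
The plan is to prove both parts by directly unwinding the definition $E=\binom{[n]}{2}\setminus\mathcal A_2$ together with the antichain axiom, invoking maximality only in the second part; nothing beyond the definitions is needed.

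\textbf{Part 1.} Fix $B\in\mathcal A\setminus\mathcal A_2$. Since $1\notin K$ and $B\notin\mathcal A_2$, we have $\lvert B\rvert\geqslant 3$. For any two distinct $a,b\in B$ I claim $\{a,b\}\notin\mathcal A_2$: otherwise $\{a,b\}$ and $B$ would be two members of $\mathcal A$ with $\{a,b\}\subsetneq B$, contradicting that $\mathcal A$ is an antichain. Hence $\{a,b\}\in\binom{[n]}{2}\setminus\mathcal A_2=E$, so every pair inside $B$ is an edge, i.e.\ $B$ induces a clique in $G$.

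\textbf{Part 2.} Let $A$ be the vertex set of a $k$-clique with $k\in K$. If $A\in\mathcal A$ we are done, taking $A'=A$. Otherwise consider $\mathcal A\cup\{A\}$: all its members have sizes in $K$, so it is a $K$-antichain unless some $A'\in\mathcal A$ is comparable to $A$, i.e.\ $A\subseteq A'$ or $A'\subseteq A$. If no such $A'$ existed, $\mathcal A\cup\{A\}$ would be a $K$-antichain properly containing $\mathcal A$, contradicting maximality; hence a comparable $A'\in\mathcal A$ exists. For the ``in particular'' clause, apply this to $A=\{a,b\}$, an edge of $G$ (a $2$-clique, and $2\in K$). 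Any $A'\in\mathcal A$ with $A'\subseteq A$ would satisfy $\lvert A'\rvert\leqslant 2$; it cannot have size at most $1$ because $1\notin K$, and it cannot equal $A$ because edges of $G$ lie outside $\mathcal A_2$ by the definition of $E$. Therefore $A\subseteq A'$ with $\lvert A'\rvert\in K\setminus\{2\}$, and since $\lvert A'\rvert\geqslant 3$ Part 1 applies and shows that $A'$ induces a clique of size $\lvert A'\rvert$ in $G$ containing the edge $\{a,b\}$, as required.

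\textbf{Main obstacle.} There is essentially no difficulty here; the statement is a routine consequence of the definitions. The only points that need a little care are that a ``comparable'' member of $\mathcal A$ may be $A$ itself (handled by first checking whether $A\in\mathcal A$), and that the standing hypothesis $1\notin K$ is exactly what prevents a member of $\mathcal A$ from being a proper subset of an edge, which together with the definition of $E$ forces every edge into a larger clique spanned by a member of $\mathcal A$.
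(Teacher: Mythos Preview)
Your proof is correct and follows exactly the same approach as the paper's: the antichain property yields Part~1, and maximality yields Part~2 by observing that otherwise $A$ could be added to $\mathcal A$. The paper's argument is simply terser (it does not separately handle the case $A\in\mathcal A$ or spell out the ``in particular'' clause), but the content is identical.
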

\begin{proof}
The first statement is just the antichain property applied to $A\in\mathcal A_2=\tbinom{[n]}{2}\setminus E$ and $B\in\mathcal A\setminus\mathcal A_2$. The second statement follows from maximality, because if no such $A'$ exists then we can add $A$ to $\mathcal A$ to obtain a strictly larger antichain.
\end{proof}
The elements of the antichain are the non-edges of the graph and some cliques whose sizes are in $K\setminus\{2\}$. We say that a graph is \emph{$K$-saturated} if every edge is contained in a $k$-clique for some $k\in K\setminus\{2\}$. Being $K$-saturated is a necessary and sufficient condition for a graph to be $G(\mathcal A)$ for some maximal $K$-antichain $\mathcal A$. In general, it is not possible to reconstruct the maximal $K$-antichain uniquely from the graph: if $\{3,4\}\subseteq K$ then a 4-clique can (among others) come from a single 4-set or from four 3-sets in $\mathcal A$. 
For a $K$-saturated graph $G=(V,E)$ with $V=[n]$ let $\mathfrak A(G)$ denote the set of all $K$-maximal antichains $\mathcal A$ with $G(\mathcal A)=G$. In other words $\mathfrak A(G)$ is the collection of all families $\mathcal A=\bigcup_{k\in K}\mathcal A_k\subseteq 2^{[n]}$ such that
\begin{enumerate}
\item $\mathcal A_2=\binom{[n]}{2}\setminus E$,
\item for all $k\in K\setminus\{2\}$ every $A\in\mathcal A_k$ induces a $k$-clique in $G$, and
\item for all $k\in K\setminus\{2\}$ and every $k$-clique $A$ in $G$, there is a $k'\in K$ and an $A'\in\mathcal A_{k'}$ with $A\subseteq A'$ or $A'\subseteq A$.
\end{enumerate}

A greedy choice among all the maximal $K$-antichains corresponding to a given $K$-saturated graph $G$ is based on the intuition to take the sets in $\mathcal A$ as large as possible for the given graph $G$. More precisely, for $k\in K$, let $\mathcal C_k$ denote the set of $k$-cliques in $G$ that are not contained in a $k'$-clique for any $k'\in K$ with $k'>k$. Note that for $K$-saturated graphs $\mathcal C_2=\varnothing$. We put $\mathcal M=\bigcup\limits_{k\in K}\mathcal C_k$ and define the maximal $K$-antichain $\mathcal A(G)\in\mathfrak A(G)$ by 
\begin{equation}
  \label{eq:canonical_ac}
\mathcal A(G)=\left(\binom{[n]}{2}\setminus E\right)\cup\mathcal M.  
\end{equation}
It is not necessarily the smallest maximal $K$-antichain corresponding to the graph $G$: for $K=\{2,3,4\}$ and $G$ the complete graph on $n$ vertices we obtain $\mathcal A(G)=\tbinom{[n]}{4}$, while $\tbinom{[n]}{3}$ is a smaller maximal $K$-antichain corresponding to the same graph. But under the following sparseness condition on $G$ the antichain $\mathcal A(G)$ has minimum size. Let $k^*$ be the minimal element of $K\setminus\{2\}$. We say that a $K$-saturated graph is \emph{$K$-sparse} if for every $A\in\mathcal M$, there exists a $k^*$-clique $B\subseteq A$ such that $A$ is the unique element of $\mathcal M$ containing $B$.
The next lemma asserts that in this situation the antichain $\mathcal A(G)$ has minimum size among all $K$-antichains with corresponding graph $G$. 
\begin{lemma}\label{lem:canonical_AC}
Let $G=(V,E)$ be a $K$-sparse, $K$-saturated graph, and let $\mathcal A=\mathcal A(G)$. Then $\lvert\mathcal A\rvert\leqslant\lvert \mathcal A'\rvert$ for all maximal $K$-antichains $\mathcal A'\in\mathfrak A(G)$.  
\end{lemma}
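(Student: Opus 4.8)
The plan is to construct an injection $\phi$ from $\mathcal{M}$ into the family $\bigcup_{k\in K\setminus\{2\}}\mathcal{A}'_k$, for an arbitrary $\mathcal{A}'\in\mathfrak{A}(G)$. Since condition~(1) forces $\mathcal{A}'_2=\binom{[n]}{2}\setminus E=\mathcal{A}_2$, since $\mathcal{M}$ consists only of cliques of size at least $3$ (as $\mathcal{C}_2=\varnothing$ by $K$-saturation) and is therefore disjoint from $\mathcal{A}'_2$, and since the families $\mathcal{A}'_j,\mathcal{A}'_k$ for distinct $j,k$ are disjoint (being subfamilies of $\binom{[n]}{j}$ and $\binom{[n]}{k}$), such an injection gives $\lvert\mathcal{A}\rvert=\lvert\mathcal{A}_2\rvert+\lvert\mathcal{M}\rvert\leqslant\lvert\mathcal{A}'_2\rvert+\sum_{k\in K\setminus\{2\}}\lvert\mathcal{A}'_k\rvert=\lvert\mathcal{A}'\rvert$, which is the claim.

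To define $\phi$, I would fix $A\in\mathcal{M}$ and use $K$-sparseness to pick a $k^*$-clique $B_A\subseteq A$ for which $A$ is the only element of $\mathcal{M}$ containing $B_A$; here $k^*\geqslant 3$ is the minimum of $K\setminus\{2\}$. Applying condition~(3) to the $k^*$-clique $B_A$ yields some $k'\in K$ and $A'\in\mathcal{A}'_{k'}$ with $B_A\subseteq A'$ or $A'\subseteq B_A$. A short case check shows $k'\neq 2$ and $B_A\subseteq A'$: a $2$-subset of the clique $B_A$ is an edge, hence not a member of $\mathcal{A}'_2$, which rules out $k'=2$ in either inclusion; and $A'\subseteq B_A$ with $k'\in K\setminus\{2\}$ forces $k'=k^*$ by minimality, whence $A'=B_A$. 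Setting $\phi(A):=A'$ (any one such $A'$), we obtain $\phi(A)\in\mathcal{A}'_{k'}$ for some $k'\in K\setminus\{2\}$, and $B_A\subseteq\phi(A)$.

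The remaining, and main, step is injectivity. Suppose $\phi(A_1)=\phi(A_2)=A'$. By condition~(2), $A'$ induces a clique of $G$ and $\lvert A'\rvert\in K$; also $B_{A_1},B_{A_2}\subseteq A'$. Let $m$ be the largest element of $K$ such that some $m$-clique $C$ of $G$ contains $A'$ --- this set of indices is nonempty (it contains $\lvert A'\rvert$) and finite. By maximality of $m$, the clique $C$ is not contained in any clique of $G$ whose size is a larger element of $K$ (otherwise $A'$ would be too), so $C\in\mathcal{C}_m\subseteq\mathcal{M}$. Now $C\in\mathcal{M}$ and $B_{A_1}\subseteq A'\subseteq C$, so the uniqueness in the choice of $B_{A_1}$ forces $C=A_1$; symmetrically $C=A_2$, hence $A_1=A_2$.

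I expect the bookkeeping of the first paragraph and the case analysis of the second to be routine; the crux is the injectivity argument, and the hypothesis that really does the work there is $K$-sparseness, through the fact that each witness clique $B_A$ lies below a \emph{unique} member of $\mathcal{M}$. Without it, two distinct cliques of $\mathcal{M}$ could be pushed to the same element of $\mathcal{A}'$ and the counting would fail --- indeed $\mathcal{A}(G)$ need not be minimal in general, as the example $K=\{2,3,4\}$ with $G=K_n$ in the preceding discussion shows.
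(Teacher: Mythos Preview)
Your proof is correct and follows essentially the same approach as the paper's: both build an injection from $\mathcal M$ into $\mathcal A'\setminus\mathcal A'_2$ by sending each $A\in\mathcal M$ to an element of $\mathcal A'$ sitting above its $K$-sparseness witness $B_A$, and both derive injectivity from the uniqueness of the member of $\mathcal M$ above $B_A$. You are somewhat more explicit than the paper in two places---the case analysis ruling out $k'=2$ and handling $A'\subseteq B_A$, and the construction of the element $C\in\mathcal M$ above $A'$---but these are exactly the steps the paper's terser argument relies on implicitly.
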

\begin{proof}
Let $\mathcal A'$ be any maximal $K$-antichain with $G(\mathcal A')=G$. Clearly $\mathcal A'_2=\mathcal A_2=\tbinom{[n]}{2}\setminus E$. Let $\mathcal C^*$ be the set of $k^*$-cliques in $G$ that are contained in exactly one element of $\mathcal M$. Since $G$ is $K$-sparse there is a mapping $f:\mathcal M\to\mathcal C^*$ with $f(A)\subseteq A$ for every $A\in\mathcal M$. By the definition of $\mathcal C^*$, the mapping $f$ is injective. By the second part of Lemma \ref{lem:simple_observations} every element of the image of $f$ is contained in some member of $\mathcal A'$. If the sets $f(A_1)$ and $f(A_2)$ are contained in the same $A\in\mathcal A'$, then $A\subseteq A_i$ for $i\in\{1,2\}$ because $f(A_i)$ is contained in a unique element of $\mathcal M$. This, in turn, implies $A_1=A_2$ for the same reason. Hence $\lvert\mathcal M\rvert$ is a lower bound for $\lvert \mathcal A'\setminus\mathcal A'_2\rvert$, and this proves the statement.      
\end{proof}
The following lemma provides a characterization of the $K$-saturated graphs $G$ such that $\mathcal A(G)$ is a strongly maximal $K$-antichain.
\begin{lemma}\label{lem:strong_maximality}
Let $G$ be a $K$-saturated graph for $K=\{2=k_1<k_2<\cdots<k_r\}$. Then $\mathcal A=\mathcal A(G)$ is strongly maximal if and only if the following holds. For every $i\in\{1,2,\ldots,r-1\}$, every $q\in\{k_i+1,\ldots,k_{i+1}-1\}$ and every $q$-clique $B$ in $G$, we have that either
\begin{enumerate}
\item\label{cond_1} $B$ contains a $k_i$-clique that is not contained in any $k_{i+1}$-clique, or
\item\label{cond_2} $B$ is contained in a $k_{i+1}$-clique.
\end{enumerate}
\end{lemma}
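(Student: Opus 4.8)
The plan is to first reduce the statement to a purely local question about cliques. By the remark preceding the lemma, $\mathcal A=\mathcal A(G)$ is strongly maximal precisely when it is maximal as a $\{2,3,\ldots,k_r\}$-antichain. Since $\mathcal A$ is already a maximal $K$-antichain, no $q$-set with $q\in K$ can be added to it, so the only way strong maximality can fail is that some $q$-set $S$ with $q\in\{3,\ldots,k_r\}\setminus K=\bigcup_{i=1}^{r-1}\{k_i+1,\ldots,k_{i+1}-1\}$ can be added without destroying the antichain property. I would next observe that such an $S$ must induce a clique in $G$: since $\lvert S\rvert=q\geqslant 3$, it is automatically incomparable with every element of $\mathcal A_2$ in the ``$S\subseteq\,\cdot\,$'' direction, and if $S$ contained a non-edge of $G$ then that non-edge would be an element of $\mathcal A_2\subseteq\mathcal A$ properly contained in $S$. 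Hence $S=B$ can be added if and only if $B$ is a $q$-clique that is incomparable with every member of $\mathcal M$. This reduces the lemma to: for each $i$, each $q\in\{k_i+1,\ldots,k_{i+1}-1\}$ and each $q$-clique $B$, there exists $A\in\mathcal M$ with $A\subseteq B$ or $B\subseteq A$ if and only if (\ref{cond_1}) or (\ref{cond_2}) holds.

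Before proving this equivalence I would record two elementary facts. First, every subset of a clique is a clique, so if $B$ is contained in some $k$-clique with $k\in K$ and $k>q$, then choosing $k$ maximal with this property yields a clique $C\supseteq B$ with $C\in\mathcal C_{k}\subseteq\mathcal M$ (maximality of $k$ forces $C$ not to lie in any larger $K$-clique). Second, a $k_i$-clique $D$ not contained in any $k_{i+1}$-clique is not contained in any $k_j$-clique with $j>i$ either, since such a clique would contain a $k_{i+1}$-subclique containing $D$; hence $D\in\mathcal C_{k_i}\subseteq\mathcal M$.

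For the ``if'' direction: if (\ref{cond_2}) holds then $B$ lies in a $k_{i+1}$-clique, and the first fact produces $A\in\mathcal M$ with $B\subseteq A$; if (\ref{cond_1}) holds then the witnessing $k_i$-clique $D\subseteq B$ lies in $\mathcal M$ by the second fact. For the ``only if'' direction, suppose $A\in\mathcal M$ is comparable with $B$, say $A$ is a $k_j$-clique. If $B\subseteq A$ then $k_j\geqslant q>k_i$, hence $k_j\geqslant k_{i+1}$, and $A$ contains a $k_{i+1}$-subclique containing $B$, giving (\ref{cond_2}). If $A\subseteq B$ then $k_j\leqslant q<k_{i+1}$, hence $k_j\leqslant k_i$; since $A\in\mathcal M$ it lies in $\mathcal C_{k_j}$, so it is contained in no $k'$-clique with $k'\in K$, $k'>k_j$. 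If $k_j=k_i$, then $A$ is a $k_i$-subclique of $B$ contained in no $k_{i+1}$-clique, which is (\ref{cond_1}); if $k_j<k_i$, then $B$ would contain a $k_i$-subclique containing $A$, contradicting $A\in\mathcal C_{k_j}$, so this subcase does not occur. Chaining the two equivalences over all $i,q,B$ yields the lemma.

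The main obstacle is the bookkeeping in the ``only if'' direction, especially ruling out $\lvert A\rvert<k_i$ when $A\subseteq B$ and making the ``pass to a maximal clique'' step precise; once the two preparatory facts about $\mathcal C_k$ are in place this is routine. I would also double-check the reduction in the first paragraph carefully, namely that inability to add any $q$-clique for $q\in\{3,\ldots,k_r\}\setminus K$ together with maximality as a $K$-antichain is genuinely equivalent to strong maximality, which is exactly the content of the ``Equivalently'' remark preceding Lemma~\ref{lem:strong_maximality}.
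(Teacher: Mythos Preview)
Your proof is correct and follows essentially the same strategy as the paper's: both arguments reduce strong maximality of $\mathcal A(G)$ to the question of whether a $q$-clique $B$ with $k_i<q<k_{i+1}$ is comparable with some member of $\mathcal M$, and then link this comparability to conditions~(\ref{cond_1}) and~(\ref{cond_2}). Your version is organised a bit more explicitly (isolating the two preparatory facts about $\mathcal C_k$ and treating the subcase $\lvert A\rvert<k_i$ separately), whereas the paper's proof is terser and leaves the analogous checks implicit; conceptually the two arguments coincide.
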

\begin{proof}
Suppose $G=G(\mathcal A)$ satisfies the condition and $\mathcal A$ is not strongly maximal. Then there is a set $B\not\in\mathcal A$ that can be added such that $\mathcal A\cup\{B\}$ is still an antichain. The set $B$ must induce a clique in $G$ and by Lemma \ref{lem:simple_observations} this implies $\lvert B\rvert\not\in K$ and $\lvert B\rvert< k_r$. Hence $k_i<\lvert B\rvert <k_{i+1}$ for some $i$. But then either of the conditions \ref{cond_1} (with Lemma \ref{lem:simple_observations}) and \ref{cond_2} (using the definition of $\mathcal A(G)$) implies that $B$ cannot be added without violating the antichain condition. For the converse, suppose $G$ is a $K$-saturated graph containing a $q$-clique $B$, $k_i<q<k_{i+1}$, such that $B$ is not contained in any $k_{i+1}$-clique but every $k_{i}$-subset of $B$ is contained in a $k_{i+1}$-clique. By construction of $\mathcal A=\mathcal A(G)$, for every $k_i$-subset $A\subseteq B$, the antichain $\mathcal A$ contains a $k_j$-set $A'\supset A$ for some $j>i$. But this implies that $\mathcal A\cup\{B\}$ is still an antichain, hence $\mathcal A$ is not strongly maximal. 
\end{proof}

To summarize, we have proved the following.
\begin{enumerate}
\item For every maximal $K$-antichain $\mathcal A$ there is a unique $K$-saturated graph $G(\mathcal A)$.
\item Minimizing the size of a maximal $K$-antichain is equivalent to 
\begin{equation}\label{eq:reformulation}
\max\limits_{G}\left(\lvert E\rvert-\min_{\mathcal A\in\mathfrak A(G)}\sum_{k\in K\setminus\{2\}}\lvert\mathcal A_k\rvert\right),
\end{equation}
where the maximum is over the set of all $K$-saturated graphs $G$ and the minimum is over the set $\mathfrak A(G)=\{\mathcal A\ :\ G(\mathcal A)=G\}$.
\item If the graph $G$ is $K$-sparse then the antichain $\mathcal A=\mathcal A(G)$ defined by (\ref{eq:canonical_ac}) achieves the minimum in (\ref{eq:reformulation}).
\end{enumerate}

\section{Constructing $K$-saturated graphs}\label{sec:construction}
\subsection{A general construction}\label{subsec:general_construction}
In this section, we describe a family of $K$-saturated graphs with many edges and few cliques (corresponding to small maximal $K$-antichains). Our construction is inspired by the optimal graphs for $\{2,3\}$-antichains in \cite{GruettmuellerHartmannKalinowskiLeckRoberts2009}: We start with a complete bipartite graph with almost equal parts and add a few edges to make the graph $K$-saturated without producing too many cliques with sizes in $K$. Let $l$ be the maximal element of $K$. The vertex set is partitioned into three disjoint sets $A$, $B$, $C$ with
\begin{align*}
\lvert A\rvert &= \left\lfloor\frac{\lfloor n/2\rfloor}{\lfloor l/2\rfloor}\right\rfloor\cdot\lfloor l/2\rfloor, &
\lvert B\rvert &= \left\lfloor\frac{\lceil n/2\rceil}{\lceil l/2\rceil}\right\rfloor\cdot\lceil l/2\rceil, & 
\lvert C\rvert &= n-\lvert A\rvert-\lvert B\rvert<l,
\end{align*}
and the edge set is determined as follows. 
\begin{itemize}
\item The induced subgraph on $A$ is the disjoint union of $\lfloor l/2\rfloor$-cliques.
\item The induced subgraph on $B$ is the disjoint union of $\lceil l/2\rceil$-cliques.
\item Every pair $ab$ with $a\in A$ and $b\in B$ is an edge.
\item The vertices in $C$ are isolated.
\end{itemize}
Clearly every clique of at least two vertices is contained in an $l$-clique, so $\mathcal M$ is the set of all $l$-cliques. Any $l$-clique contains vertices from $A$ and vertices from $B$, and every $k^*$-clique that has nonempty intersection with $A$ and with $B$ is contained in a unique $l$-clique. Thus the graph $G$ is $K$-sparse and using Lemma \ref{lem:canonical_AC} the antichain $\mathcal A(G)$, defined by (\ref{eq:canonical_ac}), has minimum size among the antichains in $\mathfrak A(G)$ and contains only $2$- and $l$-sets. We get $\lvert E\rvert=(1/4+O(1/n))n^2$ and
\[\lvert \mathcal C_l\rvert=\left\lfloor\frac{\lfloor n/2\rfloor}{\lfloor l/2\rfloor}\right\rfloor\cdot\left\lfloor\frac{\lceil n/2\rceil}{\lceil l/2\rceil}\right\rfloor=\left(\frac{1}{4\lfloor l/2\rfloor\lceil l/2\rceil}+O(1/n)\right)n^2.\]
Thus
\[\lvert E\rvert-\lvert\mathcal C_l\rvert=\left(\frac{\lfloor l/2\rfloor\lceil l/2\rceil-1}{4\lfloor l/2\rfloor\lceil l/2\rceil}+o(1)\right)n^2=
\begin{cases}
\left(\frac{l^2-4}{4l^2}+O(1/n)\right)n^2  & \text{for even }l,\\
\left(\frac{l^2-5}{4l^2-4}+O(1/n)\right)n^2  & \text{for odd }l.
\end{cases}\]
The corresponding maximal $K$-antichain has size  
\[\left(\frac{\lfloor l/2\rfloor\lceil l/2\rceil+1}{4\lfloor l/2\rfloor\lceil l/2\rceil}+O(1/n)\right)n^2.\]
Note that in this construction every $q$-clique for $2\leqslant q\leqslant l$ is contained in an $l$-clique, thus by Lemma \ref{lem:strong_maximality} the antichains $\mathcal A(G)$ are strongly $K$-maximal. 

\subsection{The case $l=4$}\label{subsec:l_4}
In the above construction we wasted the vertices in the set $C$, and the solution can be slightly improved by connecting $C$ to the rest of the graph. The best way to do this depends on the remainder of $n$ modulo $l$. Here we describe what to do for $l=4$ and $n=4m+r$ ($0\leqslant r\leqslant 3$).
\begin{description}
\item[$n=4m$.] In this case $A=[2m]$, $B=[2m+1,4m]$ and $C=\varnothing$ and the edge set is
\[E=\left\{\{i,j\}\ :\ i\in[2m],\,j\in[2m+1,4m]\right\}\cup\{\{2i-1,2i\}\ :\ i\in[2m]\},\]
and we have
\[\lvert E\rvert-\lvert\mathcal C_4\rvert=\frac{n^2}{4}+\frac{n}{2}-\frac{n^2}{16}=\frac{3n^2+8n}{16}.\]
\item[$n=4m+1$.] In this case $A=[2m]$, $B=[2m+1,4m]$ and $C=\{4m+1\}$ and the edge set is
\begin{multline*}
E=\left\{\{i,j\}\ :\ i\in[2m],\,j\in[2m+1,4m+1]\right\}\cup\left\{\{2i-1,2i\}\ :\ i\in[2m]\right\}\\ \cup\{\{4m,4m+1\}\},
\end{multline*}
and we have
\[\lvert E\rvert-\lvert\mathcal C_4\rvert=\frac{n^2-1}{4}+\frac{n+1}{2}-\frac{n-1}{4}\cdot\frac{n+3}{4}=\frac{3n^2+6n+7}{16}.\]
\item[$n=4m+2$.] In this case $A=[2m]$, $B=[2m+1,4m+2]$ and $C=\varnothing$ and the edge set is
\[E=\left\{\{i,j\}\ :\ i\in[2m],\,j\in[2m+1,4m+2]\right\}\cup\{\{2i-1,2i\}\ :\ i\in[2m+1]\},\]
and we have
\[\lvert E\rvert-\lvert\mathcal C_4\rvert=\frac{n^2-4}{4}+\frac{n}{2}-\frac{n-2}{4}\cdot\frac{n+2}{4}=\frac{3n^2+8n-12}{16}.\]
\item[$n=4m+3$.] In this case $A=[2m]$, $B=[2m+1,4m+2]$ and $C=\{4m+3\}$ and the edge set is
\begin{multline*}
E=\left\{\{i,j\}\ :\ i\in[2m]\cup\{4m+3\},\,j\in[2m+1,4m+2]\right\}\cup\{\{2i-1,2i\}\ :\ i\in[2m+1]\}\\ \cup\left\{\{2m,4m+3\}\right\},
\end{multline*}
and we have
\[\lvert E\rvert-\lvert\mathcal C_4\rvert=\frac{n^2-1}{4}+\frac{n+1}{2}-\frac{n+1}{4}\cdot\frac{n+1}{4}=\frac{3n^2+6n+5}{16}.\]
\end{description}
The construction is illustrated in Figure \ref{fig:examples}.
\begin{figure}[htb]
\begin{minipage}{.48\textwidth}
  \centering
  \includegraphics[width=\textwidth]{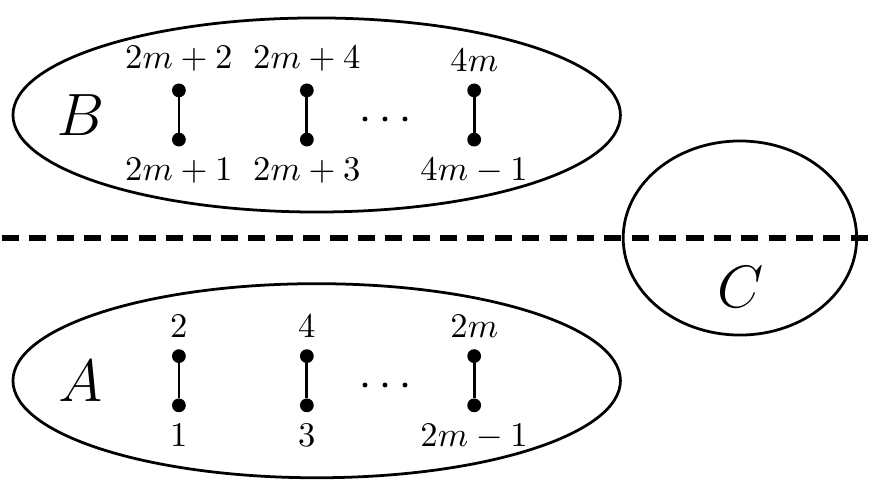}
\end{minipage}\hfill
\begin{minipage}{.48\textwidth}
  \centering
  \includegraphics[width=\textwidth]{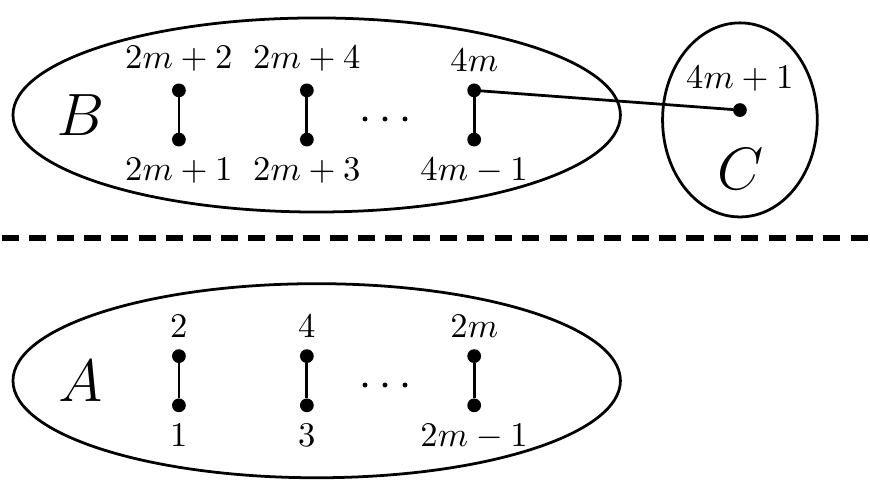}
\end{minipage}

\bigskip\bigskip

\begin{minipage}{.48\textwidth}
  \centering
  \includegraphics[width=\textwidth]{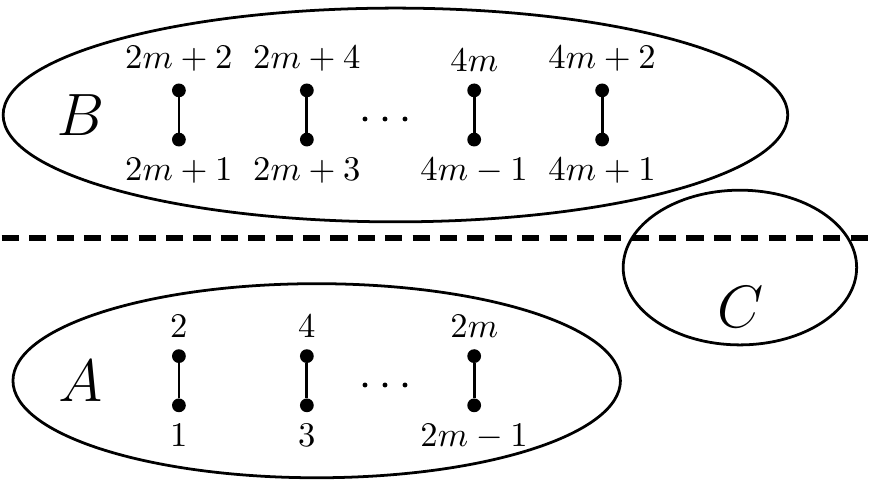}
\end{minipage}\hfill
\begin{minipage}{.48\textwidth}
  \centering
  \includegraphics[width=\textwidth]{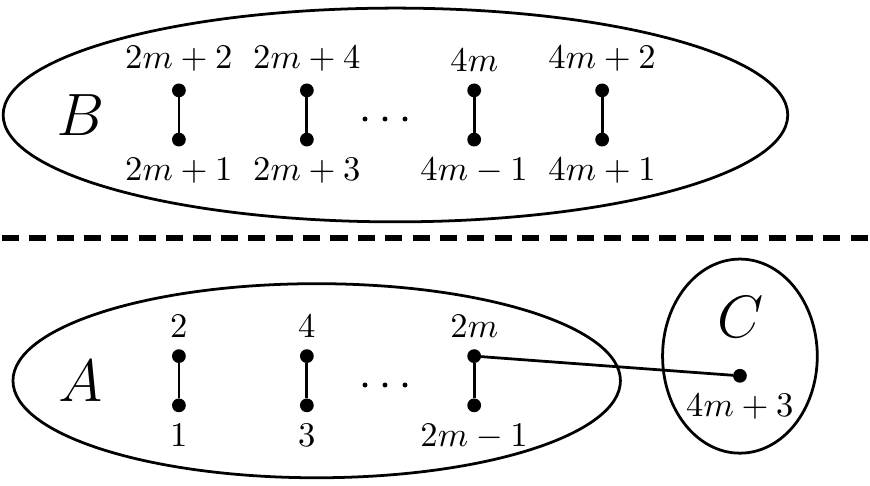}
\end{minipage}
\caption{$4$-saturated graphs for different values of $n\pmod 4$. All pairs of vertices on different sides of the dashed line are joined by an edge, but these edges are omitted in the pictures.}\label{fig:examples}
\end{figure}
We conjecture that for large $n$ these graphs are best possible.
\begin{conjecture}\label{conj:graph_conjecture}
If $n$ is large, in any $\{2,4\}$-saturated graph $G$ on $n$ vertices, we have
\[\lvert E\rvert-\lvert\mathcal C_4\rvert\leqslant f(n) :=
\begin{cases}
\left\lfloor(3n^2+8n)/16\right\rfloor & \text{if $n$ is even,}\\
\left\lceil(3n^2+6n)/16\right\rceil & \text{if $n$ is odd.}\\
\end{cases}\]   
\end{conjecture}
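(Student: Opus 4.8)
We describe the approach we would take towards Conjecture~\ref{conj:graph_conjecture}; it splits naturally into three tasks, only the second of which seems to require a genuinely new idea. Throughout, write $m=\lvert E\rvert$ and $t=\lvert\mathcal C_4\rvert$. Since $K=\{2,4\}$ has no element exceeding $4$, $\mathcal C_4$ is simply the set of all $4$-cliques $K_4$ in $G$, and $\{2,4\}$-saturation means precisely that every edge lies in a $K_4$.

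\emph{Task 1: an asymptotic bound with a nontrivial constant.} Because every edge lies in some $K_4$ and each $K_4$ contains exactly six edges, assigning each edge to one $K_4$ that contains it yields $t\ge m/6$. Now apply the graph removal lemma with $H=K_4$ and a small $\varepsilon>0$, obtaining $\delta>0$. If $t\ge\delta n^4$ then $m-t\le\binom n2-\delta n^4<0<f(n)$ once $n$ is large, so we are done. Otherwise $G$ becomes $K_4$-free after deleting at most $\varepsilon n^2$ edges, whence $m\le\mathrm{ex}(n,K_4)+\varepsilon n^2\le n^2/3+\varepsilon n^2$ by Tur\'an's theorem; combined with $t\ge m/6$ this gives $m-t\le\tfrac56 m\le(\tfrac{5}{18}+\varepsilon)n^2$. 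Letting $\varepsilon\to 0$ yields $\lvert E\rvert-\lvert\mathcal C_4\rvert\le(\tfrac{5}{18}+o(1))n^2$ --- a first nontrivial bound, but still far from the conjectured constant $3/16$.

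\emph{Task 2: sharpen the constant to $3/16$; this is the main obstacle.} The extra leverage must come from a stability analysis of graphs that are simultaneously edge-rich and $K_4$-poor. The structural starting point is the identity $6t=\sum_{uv\in E}e\bigl(G[N(u)\cap N(v)]\bigr)$, valid because the $K_4$'s through an edge $uv$ correspond exactly to the edges inside the common neighbourhood $N(u)\cap N(v)$, and saturation forces each such common neighbourhood to contain an edge. One would try to prove the rigidity statement: if $m-t\ge(3/16-o(1))n^2$ then $G$ is $o(n^2)$-close to the complete bipartite graph with parts of sizes $\lfloor n/2\rfloor$ and $\lceil n/2\rceil$ together with near-perfect matchings inside the two sides --- exactly the graphs of Section~\ref{subsec:l_4}. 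When $3\in K$ the analogous rigidity follows from classical Tur\'an stability: then an edge need only sit in a triangle, the conjectured extremal density coincides with the triangle-free Tur\'an density, and supersaturation inside the near-bipartite parts closes the argument. For $K=\{2,4\}$ this breaks down, because the conjectured extremal edge density $\tfrac14 n^2$ lies strictly below the $K_4$-free Tur\'an density $\tfrac13 n^2$; hence no off-the-shelf stability theorem forces an edge-rich $K_4$-saturated graph to be near-bipartite, and $\{2,4\}$-saturated graphs whose edge density lies strictly between these two values are not understood. A proof of Task~2 would probably require either a flag-algebra / Cauchy--Schwarz computation bounding the linear functional $m-t$ directly, or a new decomposition of $K_4$-saturated graphs into a bipartite-like part plus few cliques.

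\emph{Task 3: extract the exact value $f(n)$.} Granting the rigidity of Task~2, one would finish by a local optimisation: show that any near-extremal $\{2,4\}$-saturated graph can be transformed, by a bounded sequence of edge swaps that never decrease $\lvert E\rvert-\lvert\mathcal C_4\rvert$, into one of the four graphs (one per residue of $n$ modulo $4$) described in Section~\ref{subsec:l_4}, whose values of $\lvert E\rvert-\lvert\mathcal C_4\rvert$ have already been computed there and equal $f(n)$; since the conjecture is stated only for large $n$, no separate small-case analysis is needed. This last step is technically involved but, in principle, routine once the rigidity statement is in hand; the crux of the whole conjecture is Task~2.
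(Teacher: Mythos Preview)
The statement you are addressing is a \emph{conjecture} in the paper, not a theorem: the paper does not prove it, and indeed the authors explain in Section~\ref{sec:problems} that they do not see how removal-lemma methods can reach the constant $3/16$. Your proposal is likewise not a proof; you say so yourself when you call Task~2 the ``main obstacle'' and offer only a description of what a proof might look like. So there is no disagreement on the core point: the conjecture is open, and your Task~2 is exactly the gap.

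Where a comparison is meaningful is Task~1, and there your argument is weaker than what the paper actually proves. You use the crude count $t\ge m/6$ (each $K_4$ has six edges) to get $m-t\le\frac{5}{18}n^2+o(n^2)\approx 0.278\,n^2$. The paper's Lemma~\ref{lem:first_bound} is sharper: after removing $o(n^2)$ edges to obtain a $K_4$-free subgraph $E_0$, any $K_4$ of $G$ can contribute at most \emph{five} (not six) edges to $E_0$, so $\lvert E_0\rvert\le 5\lvert\mathcal C_4\rvert$ and hence $m-t\le\frac{4}{5}\lvert E_0\rvert+o(n^2)$. Combined with Tur\'an this already gives $\frac{4}{15}n^2\approx 0.267\,n^2$. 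The paper then pushes further (Lemma~\ref{lem:triples_and_quads} and Theorem~\ref{thm:weak_max_asymptotics}) by bounding the number of triangles from below via the Fisher--Razborov density inequality and from above via the $K_4$-count, obtaining $m-t\le\frac{2(39+\sqrt{21})}{375}n^2+o(n^2)\approx 0.232\,n^2$. This triangle-counting step is an idea absent from your outline; if you are going to sketch a route toward the conjecture, it is worth incorporating, since it narrows the remaining gap appreciably. Your structural identity $6t=\sum_{uv\in E}e(G[N(u)\cap N(v)])$ is correct and potentially useful, but on its own it does not go beyond what the paper already achieves.
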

In terms of antichains the equivalent statement is that for any maximal $\{2,4\}$-antichain $\mathcal A$ on $n$ points, we have 
\[\lvert\mathcal A\rvert\geqslant\begin{cases}
\left\lceil (5n^2-16n)/16\right\rceil & \text{if $n$ is even,}\\
\left\lfloor(5n^2-14n)/16\right\rfloor & \text{if $n$ is odd.}\\
\end{cases}\]  
\section{Small values of $n$}\label{sec:small_n}
In this section we present some computational results for $K\subseteq\{2,3,4\}$ and $n\leqslant 16$. 
These results, summarized in Table \ref{tab:small_cases}, were obtained by solving binary programs for maximizing $\lvert E\rvert-\lvert\mathcal A_4\rvert$ and $\lvert E\rvert-\lvert\mathcal A_3\rvert-\lvert\mathcal A_4\rvert$, respectively, over all pairs $(G,\mathcal A)$ of a $K$-saturated graph $G$ on $n$ vertices and an antichain $\mathcal A\in\mathfrak A(G)$. The binary programs were solved with the commercial MIP solver CPLEX.   
\begin{table}[htb]
  \centering
  \begin{tabular}[t]{r|rr|rr|r} \hline \hline
 & \multicolumn{2}{c|}{$K=\{2,4\}$} & \multicolumn{2}{c|}{$K=\{2,3,4\}$} &  \\ 
$n$ & $\lvert\mathcal A\rvert$ & profile & $\lvert\mathcal A\rvert$ & profile & $\binom{n}{2}-f(n)$ \\ \hline
4 & 1 & $(0,1)$ & 1 & $(0,0,1)$ & 1 \\
5 & 3 & $(1,2)$ & 3 & $(1,0,2)$ & 3 \\
6 & 6 & $(2,4)$ & 6 & $(2,0,4)$ & 6 \\
7 & 9 & $(5,4)$ & 9 & $(5,0,4)$ & 9 \\
8 & 12 & $(8,4)$ & 12 & $(8,0,4)$ & 12 \\
9 & 17 & $(11,6)$ & 17 & $(11,0,6)$ & 17 \\
10 & 20 & $(15,5)$ & 22 & $(16,0,6)$ & 22 \\
11 & 26 & $(19,7)$ & 28 & $(19,0,9)$ & 28 \\
12 & 33 & $(24,9)$ & 33 & $(24,0,9)$ & 33 \\
13 & 40 & $(29,11)$ & 41 & $(29,0,12)$ & 41 \\
14 & 47 & $(35,12)$ & 48 & $(36,0,12)$ & 48 \\
15 & 56 & $(41,15)$ & 57 & $(41,0,16)$ & 57 \\
16 & 64 & $(48,16)$ & 64 & $(48,0,16)$ & 64 \\ \hline\hline
  \end{tabular}
\caption{Computational results for small values of $n$. We report the size of the optimal antichain $\mathcal A$ that was found and its profile vector $(p_k)_{k\in K}$ where $p_k$ is the number of $k$-sets in $\mathcal A$. The last column contains the size of the antichain corresponding to the graphs constructed in Section \ref{subsec:l_4}. }\label{tab:small_cases}
\end{table}
We make the following observations.
\begin{enumerate}
\item For $n\leqslant 16$, the construction in Section \ref{subsec:l_4} yields optimal $\{2,3,4\}$-saturated graphs. So the bound in Conjecture \ref{conj:graph_conjecture} might be true for $\{2,3,4\}$-graphs even without the restriction to large $n$.
\item If $n\equiv 0\pmod 4$, i.e. when the construction in Section \ref{subsec:l_4} yields a regular graph, it is also optimal for $\{2,4\}$-saturation.
\item For $K=\{2,3,4\}$ all optimal examples that have been found do not contain any 3-set.
\item A particularly interesting case is $n=10$. The unique optimal $\{2,4\}$-saturated graph has the set of 4-cliques 
\[\mathcal A_4=\{1234,\,1567,\,2589,\,368a,\,479a\},\]
defining a $6$-regular graph in which every vertex is contained in exactly two 4-cliques, and every edge is contained in a unique 4-clique. This can not generalize to large values of $n$, since by the removal lemma in a $\{2,4\}$-saturated graph on $n$ vertices with the property that every edge is contained in at most a constant number of 4-cliques, the number of edges is $o(n^2)$. 
\end{enumerate}

\section{Asymptotic results}\label{sec:asymptotics}
The removal lemma argument that was used in \cite{gerbner2011saturating} to derive asymptotic lower bounds for the size of a maximal $\{l,l+1\}$-antichain in terms of Tur\'an numbers can be adapted to establish that for sets $K$ containing $\{2,3\}$ the construction in Section \ref{subsec:general_construction} gives the correct leading term.
\begin{theorem}\label{thm:strong_max_asymptotics}
Let $K$ be a fixed set containing 2 and 3, and let $l$ be its maximal element. For $K$-saturated graphs $G$ on $n$ vertices and $\mathcal A\in\mathfrak A(G)$, we have for $n\to\infty$,
\[\lvert E\rvert-\sum_{k\in K\setminus\{2\}}\lvert\mathcal A_k\rvert\leqslant\left(\frac{\lfloor l/2\rfloor\lceil l/2\rceil-1}{4\lfloor l/2\rfloor\lceil l/2\rceil}+o(1)\right)n^2=
\begin{cases}
\left(\frac{l^2-4}{4l^2}+o(1)\right)n^2  & \text{for even }l,\\
\left(\frac{l^2-5}{4l^2-4}+o(1)\right)n^2  & \text{for odd }l.
\end{cases}.\] 
\end{theorem}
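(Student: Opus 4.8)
The plan is to run a standard graph removal lemma argument, essentially following the scheme of Gerbner et al.\ but adapted to the $K$-saturation setting. The key reduction is to note that since $3\in K$, every edge of a $K$-saturated graph $G$ lies in a triangle, so if $G$ had few triangles (say at most $\delta n^3$), the removal lemma would let us delete $\varepsilon n^2$ edges to destroy all triangles, hence all edges — forcing $|E|\leqslant\varepsilon n^2$, which is far below the claimed bound. So we may assume $G$ contains at least $\delta n^3$ triangles for a suitable $\delta=\delta(\varepsilon)$.

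The next step is to exploit this abundance of triangles together with the clique structure to get a handle on $|E|-\sum_{k}|\mathcal A_k|$. First I would observe that it suffices to bound $|E|-|\mathcal A_3|$ from above, since the other terms $|\mathcal A_k|$ for $k\geqslant 4$ only help; more carefully, $\sum_{k\in K\setminus\{2\}}|\mathcal A_k|\geqslant|\mathcal A_3|$, so it is enough to show $|E|-|\mathcal A_3|\leqslant(c_l+o(1))n^2$ with $c_l=\frac{\lfloor l/2\rfloor\lceil l/2\rceil-1}{4\lfloor l/2\rfloor\lceil l/2\rceil}$. Here $\mathcal A_3=\mathcal C_3$ in the canonical antichain $\mathcal A(G)$, the set of triangles not contained in any $k$-clique with $k>3$, $k\in K$; but we should work with an arbitrary $\mathcal A\in\mathfrak A(G)$, where every triangle of $G$ is covered by some $A\in\mathcal A$ with $|A|\in K$. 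The counting idea is: every edge $e\in E$ lies in at least one triangle, and we want to charge each edge either to a ``wasted'' triangle (an element of $\mathcal A_3$) or to show that edges covered only by larger cliques are scarce. Concretely, partition $E$ into edges lying in a triangle of $\mathcal A_3$ and edges all of whose triangles lie inside members of $\mathcal A_{\geqslant4}:=\bigcup_{k\geqslant 4}\mathcal A_k$. For the former type, each triangle in $\mathcal A_3$ accounts for at most $3$ edges, giving a contribution of at most $3|\mathcal A_3|$ to $|E|$; for the latter type, the edges are all covered by the (few, but large) cliques in $\mathcal A_{\geqslant 4}$, and one bounds their number via a Turán/Kruskal–Katona-type argument using that $G$ restricted to the union of these cliques has bounded clique number $l$.

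The heart of the matter, and the step I expect to be the main obstacle, is converting ``$G$ has $\geqslant\delta n^3$ triangles'' into the sharp constant $c_l$ rather than just some constant bounded away from $1/4$. The clean way is a cleaning/regularity step: apply the removal lemma (or a supersaturation argument) to reduce to a graph $G'$ which is ``$l$-clique-covered'' in a robust sense — every edge of $G'$ lies in many $l$-cliques — at the cost of $o(n^2)$ edges, and on such a graph invoke the Kruskal–Katona theorem (in its Lovász form) to bound the number of $l$-cliques from below in terms of $|E|$, giving $|\mathcal C_l|\geqslant(\tfrac{1}{4\lfloor l/2\rfloor\lceil l/2\rceil}-o(1))|E|^{?}$-type estimates, and hence $|E|-|\mathcal A_3|-|\mathcal A_{\geqslant 4}|\leqslant(c_l+o(1))n^2$ matching the construction. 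The delicate bookkeeping is handling the interaction between $\mathcal A_3$ (size unbounded, clique size exactly $3$) and $\mathcal A_{\geqslant4}$ (possibly size $\Theta(n^2)$, clique size up to $l$): one must show that an edge ``wasted'' on several small triangles can be amortized against those triangles, while the $l$-clique structure forces $|\mathcal C_l|$ to be large whenever $|E|$ is close to $n^2/4$. I would model this after the extremal configuration — complete bipartite graph with one side a union of $\lfloor l/2\rfloor$-cliques, the other a union of $\lceil l/2\rceil$-cliques — and argue that any deviation increasing $|E|$ toward $n^2/4$ strictly increases the clique count by at least the rate $\frac{1}{4\lfloor l/2\rfloor\lceil l/2\rceil}$, which is exactly what pins down the leading coefficient.
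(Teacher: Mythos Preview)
Your opening step contains a genuine error that derails the whole plan. You argue: ``every edge lies in a triangle, so if $G$ had at most $\delta n^3$ triangles, the removal lemma lets us delete $\varepsilon n^2$ edges to destroy all triangles, hence all edges.'' The last clause is false. After deleting $\varepsilon n^2$ edges the remaining graph is triangle-free, but a triangle-free graph can have up to $n^2/4$ edges; the fact that in the \emph{original} graph each edge sat in a triangle says nothing once those triangles have been broken by deleting \emph{other} edges. (Concretely: take a bipartite graph on $n-1$ vertices with $\sim n^2/4$ edges and add a universal vertex; every edge is in a triangle, there are $O(n^2)=o(n^3)$ triangles, yet $|E|\sim n^2/4$.) So your dichotomy ``few triangles $\Rightarrow$ few edges, otherwise many triangles'' collapses, and the subsequent plan to exploit an abundance of triangles is aimed at a case that need not occur.

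In fact the paper's proof runs in the opposite direction and is much shorter than what you sketch. One first observes that for an \emph{optimal} pair $(G,\mathcal A)$ one has $\sum_{k\in K\setminus\{2\}}|\mathcal A_k|\leqslant n^2$, and since $3\in K$ every triangle of $G$ is contained in some $A\in\bigcup_k\mathcal A_k$; hence the triangle count is at most $\sum_k\binom{k}{3}|\mathcal A_k|=O(n^2)=o(n^3)$. The removal lemma then gives a partition $E=E_0\cup E_1$ with $E_0$ triangle-free and $|E_1|=o(n^2)$, so $|E_0|\leqslant n^2/4$ by Tur\'an. The decisive counting step---which your proposal never reaches---is that a triangle-free edge set meets any $k$-clique in at most $\lfloor k/2\rfloor\lceil k/2\rceil$ edges; summing over all $A\in\bigcup_k\mathcal A_k$ (each edge of $E_0$ is covered at least once) yields $|E_0|\leqslant\lfloor l/2\rfloor\lceil l/2\rceil\sum_k|\mathcal A_k|$, and combining with $|E_0|\leqslant n^2/4$ gives the claimed constant directly. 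No Kruskal--Katona, no supersaturation, and no case analysis on $\mathcal A_3$ versus $\mathcal A_{\geqslant 4}$ is needed.
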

\begin{proof}
Let $(G,\mathcal A)$ be a pair of a $K$-saturated graph $G=(V,E)$ and an antichain $\mathcal A\in\mathfrak A(G)$ which maximizes $\lvert E\rvert-\sum_{k\in K\setminus\{2\}}\lvert\mathcal A_k\rvert$. In particular, $\sum_{k\in K\setminus\{2\}}\lvert\mathcal A_k\rvert\leqslant n^2$, and since every triangle is contained in an element of some $\mathcal A_k$, and every $k$-clique contains $\tbinom{k}{3}$ triangles, the number of triangles in $G$ is bounded by 
\[\sum_{k\in K}\lvert\mathcal A_k\rvert\binom{k}{3}\leqslant n^2\sum_{k\in K\setminus\{2\}}\binom{k}{3}=o(n^3).\] 
By the removal lemma, we can make $G$ triangle-free by removing $o(n^2)$ edges, in other words there is a partition $E=E_0\cup E_1$ such that $\lvert E_1\rvert=o(n^2)$ and $E_0$ is triangle-free, and thus $\lvert E_0\rvert\leqslant n^2/4$ by Tur\'an's theorem. For $xy\in E_0$ and $k\in K$, let $\lambda_k(xy)$ denote the number of elements of $\mathcal A_k$ containing $xy$. Using the $K$-saturation of $G$ and observing that $E_0$, being triangle-free, contains at most $\lfloor k/2\rfloor\cdot\lceil k/2\rceil$ edges of any $k$-clique in $G$, we obtain
\[\lvert E_0\rvert\leqslant\sum_{xy\in E_0}\sum_{k\in K\setminus\{2\}}\lambda_k(xy)\leqslant\sum_{k\in K\setminus\{2\}}\lvert\mathcal A_k\rvert\lfloor k/2\rfloor\cdot\lceil k/2\rceil\leqslant \lfloor l/2\rfloor\cdot\lceil l/2\rceil\cdot\sum_{k\in K\setminus\{2\}}\lvert\mathcal A_k\rvert,\]
and consequently
\begin{multline*}
\lvert E\rvert-\sum_{k\in K\setminus\{2\}}\lvert\mathcal A_k\rvert=\lvert E_0\rvert+\lvert E_1\rvert-\sum_{k\in K\setminus\{2\}}\lvert\mathcal A_k\rvert\leqslant\left(1-\frac{1}{\lfloor l/2\rfloor\cdot\lceil l/2\rceil}\right)\lvert E_0\rvert+o(n^2)\\ \leqslant\frac{\lfloor l/2\rfloor\cdot\lceil l/2\rceil-1}{\lfloor l/2\rfloor\cdot\lceil l/2\rceil}\frac{n^2}{4}+o(n^2).\qedhere
\end{multline*}
\end{proof}
\begin{corollary}\label{cor:K_contains_3}
Let $K$ be a fixed set containing 2 and 3 with maximal element $l$, and let $\mathcal A$ be a maximal $K$-antichain of minimum size on $n$ points. Then
\[\lvert\mathcal A\rvert=\left(\frac{\lfloor l/2\rfloor\lceil l/2\rceil+1}{4\lfloor l/2\rfloor\lceil l/2\rceil}+o(1)\right)n^2.\]
\end{corollary}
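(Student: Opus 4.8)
The plan is to derive the corollary directly from Theorem~\ref{thm:strong_max_asymptotics} together with the construction in Section~\ref{subsec:general_construction}, using the reformulation~(\ref{eq:reformulation}). First I would recall that for any maximal $K$-antichain $\mathcal A$ on $n$ points with associated graph $G=G(\mathcal A)$, we have the identity
\[
\lvert\mathcal A\rvert=\binom{n}{2}-\lvert E\rvert+\sum_{k\in K\setminus\{2\}}\lvert\mathcal A_k\rvert,
\]
since $\mathcal A_2=\binom{[n]}{2}\setminus E$ and the remaining members of $\mathcal A$ are exactly the sets counted by $\sum_{k\in K\setminus\{2\}}\lvert\mathcal A_k\rvert$. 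Hence minimizing $\lvert\mathcal A\rvert$ over all maximal $K$-antichains is the same as maximizing $\lvert E\rvert-\sum_{k\in K\setminus\{2\}}\lvert\mathcal A_k\rvert$ over all pairs $(G,\mathcal A)$ with $G$ a $K$-saturated graph and $\mathcal A\in\mathfrak A(G)$, which is precisely the quantity bounded in Theorem~\ref{thm:strong_max_asymptotics}.

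For the upper bound on $\lvert\mathcal A\rvert$ (i.e. that the construction is essentially optimal), I would plug the graph $G$ from Section~\ref{subsec:general_construction} into the identity above. That construction is $K$-sparse and $K$-saturated, so by Lemma~\ref{lem:canonical_AC} the antichain $\mathcal A(G)$ has minimum size in $\mathfrak A(G)$, and the excerpt already records that
\[
\lvert E\rvert-\lvert\mathcal C_l\rvert=\left(\frac{\lfloor l/2\rfloor\lceil l/2\rceil-1}{4\lfloor l/2\rfloor\lceil l/2\rceil}+o(1)\right)n^2,
\]
whence
\[
\lvert\mathcal A(G)\rvert=\binom{n}{2}-\left(\lvert E\rvert-\lvert\mathcal C_l\rvert\right)=\left(\frac12-\frac{\lfloor l/2\rfloor\lceil l/2\rceil-1}{4\lfloor l/2\rfloor\lceil l/2\rceil}+o(1)\right)n^2=\left(\frac{\lfloor l/2\rfloor\lceil l/2\rceil+1}{4\lfloor l/2\rfloor\lceil l/2\rceil}+o(1)\right)n^2.
\]
This shows the minimum size is at most this value. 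For the matching lower bound, I would take any maximal $K$-antichain $\mathcal A$ of minimum size, apply the identity together with the bound from Theorem~\ref{thm:strong_max_asymptotics} to the pair $(G(\mathcal A),\mathcal A)$, obtaining
\[
\lvert\mathcal A\rvert=\binom{n}{2}-\left(\lvert E\rvert-\sum_{k\in K\setminus\{2\}}\lvert\mathcal A_k\rvert\right)\geqslant\frac{n^2}{2}-\left(\frac{\lfloor l/2\rfloor\lceil l/2\rceil-1}{4\lfloor l/2\rfloor\lceil l/2\rceil}+o(1)\right)n^2=\left(\frac{\lfloor l/2\rfloor\lceil l/2\rceil+1}{4\lfloor l/2\rfloor\lceil l/2\rceil}+o(1)\right)n^2.
\]
Combining the two directions gives the claimed asymptotic equality.

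This is a short deduction rather than a genuinely new argument, so there is no serious obstacle: the only points requiring care are the bookkeeping in the identity $\lvert\mathcal A\rvert=\binom{n}{2}-\lvert E\rvert+\sum_{k}\lvert\mathcal A_k\rvert$ (making sure no set is double-counted, which is clear since the levels $\mathcal A_k$ are disjoint), and checking that the arithmetic $\frac12-\frac{\lfloor l/2\rfloor\lceil l/2\rceil-1}{4\lfloor l/2\rfloor\lceil l/2\rceil}=\frac{\lfloor l/2\rfloor\lceil l/2\rceil+1}{4\lfloor l/2\rfloor\lceil l/2\rceil}$ is correct, which it is since $\binom{n}{2}=\tfrac12 n^2+O(n)$ absorbs into the $o(n^2)$ term. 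One should also note that Theorem~\ref{thm:strong_max_asymptotics} is stated for the maximizing pair, but its bound applies verbatim to every $K$-saturated pair $(G,\mathcal A)$, so it applies in particular to $(G(\mathcal A),\mathcal A)$ for our minimum-size $\mathcal A$.
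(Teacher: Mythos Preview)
Your proposal is correct and follows exactly the paper's approach: the lower bound comes from Theorem~\ref{thm:strong_max_asymptotics} via the identity $\lvert\mathcal A\rvert=\binom{n}{2}-\bigl(\lvert E\rvert-\sum_{k\in K\setminus\{2\}}\lvert\mathcal A_k\rvert\bigr)$, and the upper bound from the construction in Section~\ref{subsec:general_construction}. The only cosmetic point is that your final caveat is unnecessary, since Theorem~\ref{thm:strong_max_asymptotics} is already stated for arbitrary pairs $(G,\mathcal A)$, not just the maximizing one.
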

\begin{proof}
  The lower bound follows from Theorem \ref{thm:strong_max_asymptotics} and the upper bound comes from the construction in Section \ref{subsec:general_construction}.
\end{proof}
\begin{corollary}\label{cor:strongly_maximal_asymptotics}
Let $K$ be a fixed set containing 2 with maximal element $l$, and let $\mathcal A$ be a strongly maximal $K$-antichain of minimum size on $n$ points. Then
\[\lvert\mathcal A\rvert=\left(\frac{\lfloor l/2\rfloor\lceil l/2\rceil+1}{4\lfloor l/2\rfloor\lceil l/2\rceil}+o(1)\right)n^2.\]
\end{corollary}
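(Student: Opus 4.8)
The plan is to sandwich the minimum size between a lower bound coming from the already–settled case in which $K$ contains $3$, and an upper bound coming from the explicit construction of Section~\ref{subsec:general_construction}.

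For the upper bound I would invoke that construction directly: for every $n$ it produces a $K$-saturated graph $G$ in which every $q$-clique with $2\leqslant q\leqslant l$ is contained in an $l$-clique, so by Lemma~\ref{lem:strong_maximality} the associated antichain $\mathcal A(G)$ is strongly maximal, and its size has already been computed to be $\left(\tfrac{\lfloor l/2\rfloor\lceil l/2\rceil+1}{4\lfloor l/2\rfloor\lceil l/2\rceil}+O(1/n)\right)n^2$. Hence the minimum size of a strongly maximal $K$-antichain on $[n]$ is at most $\left(\tfrac{\lfloor l/2\rfloor\lceil l/2\rceil+1}{4\lfloor l/2\rfloor\lceil l/2\rceil}+o(1)\right)n^2$.

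For the lower bound the key point is the equivalent description of strong maximality recorded in Section~\ref{sec:intro}: since $1\notin K$ and $2\in K$, the smallest element of $K$ is $2$, so a strongly maximal $K$-antichain is in particular a maximal $\{2,3,\dots,l\}$-antichain. (If $l=2$ then $K=\{2\}$ and the only strongly maximal $K$-antichain is $\binom{[n]}{2}$, which has size $\tfrac12 n^2+o(n^2)$, matching the claimed value; so assume $l\geqslant 3$.) Consequently the size of any strongly maximal $K$-antichain on $[n]$ is at least the minimum size of a maximal $\{2,3,\dots,l\}$-antichain on $[n]$. Now $\{2,3,\dots,l\}$ contains $2$ and $3$ and has maximal element $l$, so Corollary~\ref{cor:K_contains_3} applies to it and shows that this minimum equals $\left(\tfrac{\lfloor l/2\rfloor\lceil l/2\rceil+1}{4\lfloor l/2\rfloor\lceil l/2\rceil}+o(1)\right)n^2$. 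Combining this with the upper bound yields the statement. There is no real obstacle: imposing strong maximality collapses the problem for an arbitrary $K$ (with $2\in K$) onto the interval case $\{2,3,\dots,l\}$, where $3$ is present and Theorem~\ref{thm:strong_max_asymptotics} does the work; the only things to verify — the reduction above and the strong $K$-maximality of the construction via Lemma~\ref{lem:strong_maximality} — are already essentially spelled out in the text.
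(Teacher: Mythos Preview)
Your proposal is correct and follows essentially the same approach as the paper: the paper's proof likewise obtains the upper bound from the construction in Section~\ref{subsec:general_construction} and the lower bound by observing that a strongly maximal $K$-antichain is automatically a maximal $(K\cup\{3\})$-antichain, to which Corollary~\ref{cor:K_contains_3} applies. Your reduction to the interval $\{2,3,\dots,l\}$ instead of $K\cup\{3\}$ is a cosmetic difference only---both are sets containing $2$ and $3$ with maximum $l$---and your explicit handling of the degenerate case $l=2$ and the appeal to Lemma~\ref{lem:strong_maximality} for strong maximality of the construction are welcome clarifications that the paper leaves implicit.
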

\begin{proof}
The lower bound follows from Corollary \ref{cor:K_contains_3}, because $\mathcal A$ is also a maximal $(K\cup\{3\})$-antichain, and the upper bound comes again from the construction in Section \ref{subsec:general_construction}.   
\end{proof}
We expect that the statement of Theorem \ref{thm:strong_max_asymptotics} remains true if the condition $3\in K$ is dropped, but in this case we can prove only a weaker bound. The main difficulty comes from the fact that we are not able to deduce that an optimal graph is essentially triangle-free (meaning the number of triangles is $o(n^3)$). This weakens both bounds that are essential in the proof of Theorem \ref{thm:strong_max_asymptotics}: the upper bound for the number of edges, and the lower bound for the number of cliques in terms of the edge number. The obvious starting point for deriving bounds is that an optimal graph is almost $K_k$-free for every $k\in K$, which by Tur\'an's theorem yields an asymptotic upper bound for the number of edges. Unfortunately, for $3\not\in K$ this bound is far away from the conjectured truth $n^2/4$. We now consider $K=\{2,4\}$ and derive a bound for $\lvert E\rvert-\lvert\mathcal C_4\rvert$ using lower bounds for the number of triangles in graphs with more than $n^2/4$ edges. Note that for sets $K$ of size $2$, the antichain $\mathcal A(G)$ is the only element of $\mathfrak A(G)$, so maximizing $\lvert E\rvert-\lvert\mathcal C_4\rvert$ is equivalent to (\ref{eq:reformulation}). For notational convenience, in the following we write $\gamma$ for $\lvert E\rvert/n^2$. Proceeding as in the proof of Theorem \ref{thm:strong_max_asymptotics}, we obtain a first bound.
\begin{lemma}\label{lem:first_bound}
Let $G$ be a $\{2,4\}$-saturated graph on $n$ vertices. Then
\begin{equation}\label{eq:first_bound}
  \lvert E\rvert-\lvert\mathcal C_4\rvert\leqslant\left(\frac{4\gamma}{5}+o(1)\right)n^2.
\end{equation}
\end{lemma}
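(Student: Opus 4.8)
The plan is to mimic the proof of Theorem~\ref{thm:strong_max_asymptotics}, but with $l=4$, and to be careful about the fact that we can no longer assume the graph is essentially triangle-free. Let $G$ be a $\{2,4\}$-saturated graph maximizing $\lvert E\rvert-\lvert\mathcal C_4\rvert$; write $\mathcal A=\mathcal A(G)$, so $\lvert\mathcal A_4\rvert=\lvert\mathcal C_4\rvert$. We may assume $\lvert E\rvert-\lvert\mathcal C_4\rvert>0$, hence $\lvert\mathcal C_4\rvert<\lvert E\rvert\leqslant n^2$. Since every edge lies in a $4$-clique and every $4$-clique contains $6$ triangles, the number of triangles in $G$ is at most $6\lvert\mathcal C_4\rvert\leqslant 6n^2=o(n^3)$. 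By the graph removal lemma (applied with $H=K_3$), we can make $G$ triangle-free by deleting $o(n^2)$ edges: write $E=E_0\cup E_1$ with $\lvert E_1\rvert=o(n^2)$ and $E_0$ triangle-free.

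Next I would bound $\lvert E_0\rvert$ from above in terms of $\lvert\mathcal C_4\rvert$. Each $4$-clique of $G$, being a $K_4$, contributes at most $\lfloor 4/2\rfloor\cdot\lceil 4/2\rceil=4$ edges to the triangle-free set $E_0$ (a triangle-free graph on $4$ vertices has at most $4$ edges, by Turán / Mantel). By $\{2,4\}$-saturation every edge of $E_0$ lies in at least one $4$-clique, so summing over $4$-cliques,
\[
\lvert E_0\rvert\ \leqslant\ \sum_{xy\in E_0}\lambda_4(xy)\ \leqslant\ 4\,\lvert\mathcal C_4\rvert,
\]
where $\lambda_4(xy)$ is the number of $4$-cliques through $xy$; here the first inequality uses that each edge of $E_0$ is counted at least once on the left, and the second rearranges the double count over (edge in $E_0$, clique containing it) pairs, noting each $4$-clique supplies at most $4$ such pairs. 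Hence $\lvert\mathcal C_4\rvert\geqslant\lvert E_0\rvert/4$, and therefore
\[
\lvert E\rvert-\lvert\mathcal C_4\rvert\ =\ \lvert E_0\rvert+\lvert E_1\rvert-\lvert\mathcal C_4\rvert\ \leqslant\ \lvert E_0\rvert-\frac{\lvert E_0\rvert}{4}+o(n^2)\ =\ \frac34\lvert E_0\rvert+o(n^2).
\]

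The difference from Theorem~\ref{thm:strong_max_asymptotics} is the final step: there one invokes Mantel's theorem to say $\lvert E_0\rvert\leqslant n^2/4$, giving the clean bound. Here, however, we do not want to throw away the information that $G$ itself may be dense; since $E_0\subseteq E$ we trivially have $\lvert E_0\rvert\leqslant\lvert E\rvert=\gamma n^2$, and substituting this gives
\[
\lvert E\rvert-\lvert\mathcal C_4\rvert\ \leqslant\ \frac34\,\gamma n^2+o(n^2)\ =\ \left(\frac{3\gamma}{4}+o(1)\right)n^2.
\]
This is slightly stronger than (\ref{eq:first_bound}); to land exactly on $4\gamma/5$ one should instead combine the two estimates $\lvert E_0\rvert\leqslant\lvert E\rvert$ and, say, the fact that $\lvert\mathcal C_4\rvert\geqslant\lvert E_0\rvert/4$ together with a cruder accounting that re-expresses the bound in the normalization $\gamma=\lvert E\rvert/n^2$ the paper prefers for the subsequent lemmas; the coefficient $4/5$ arises because one writes $\lvert E\rvert-\lvert\mathcal C_4\rvert\leqslant\lvert E\rvert-\lvert\mathcal C_4\rvert$ and optimizes the trade-off using $\lvert\mathcal C_4\rvert\geqslant\max\{\lvert E_0\rvert/4,\ \lvert E\rvert-\lvert E_1\rvert-\text{(edges in no triangle)}\}$ — in any case the only genuine ingredients are the removal lemma and the Mantel-type count ``$K_4$ has $\leqslant 4$ edges in a triangle-free subgraph''. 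The main obstacle, and the reason the resulting bound is weaker than the conjectured $3n^2/16$, is precisely that we have no handle forcing $\gamma$ close to $1/4$: without $3\in K$ the optimal graph need not be almost triangle-free, so $E_1$ (and the dense part of $G$) cannot be discarded, and the bound degrades to one that is only useful when combined in Section~\ref{sec:asymptotics} with separate lower bounds on the triangle count of graphs with $\gamma>1/4$.
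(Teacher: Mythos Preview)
Your argument contains a genuine gap at the very first step: you claim that the number of triangles in $G$ is at most $6\lvert\mathcal C_4\rvert$ because ``every edge lies in a $4$-clique and every $4$-clique contains $6$ triangles''. This is not valid. First, a $K_4$ contains $\binom{4}{3}=4$ triangles, not $6$. More importantly, $\{2,4\}$-saturation only forces every \emph{edge} to lie in some $4$-clique; it says nothing about \emph{triangles}. A triangle $abc$ may have each of its three edges covered by a different $4$-clique while $abc$ itself is contained in no $4$-clique at all. So there is no bound on the number of triangles in terms of $\lvert\mathcal C_4\rvert$, and the paper explicitly states just before this lemma that it is ``not able to deduce that an optimal graph is essentially triangle-free''. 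Your own closing remark (``without $3\in K$ the optimal graph need not be almost triangle-free'') directly contradicts the step you used to invoke the triangle removal lemma.

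The paper's actual route is to apply the removal lemma with $H=K_4$ rather than $H=K_3$: since $\lvert\mathcal C_4\rvert=O(n^2)=o(n^4)$, one can delete $o(n^2)$ edges to obtain a $K_4$-free subgraph $E_0$. A $K_4$-free graph on four vertices has at most $5$ edges, so each $4$-clique of $G$ contributes at most $5$ edges to $E_0$; the double count then gives $\lvert E_0\rvert\leqslant 5\lvert\mathcal C_4\rvert$, hence $\lvert E\rvert-\lvert\mathcal C_4\rvert\leqslant\frac{4}{5}\lvert E_0\rvert+o(n^2)=\bigl(\frac{4\gamma}{5}+o(1)\bigr)n^2$. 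This is exactly why the coefficient is $4/5$ rather than $3/4$: the relevant extremal number is $\operatorname{ex}(4,K_4)=5$, not $\operatorname{ex}(4,K_3)=4$. Your stronger coefficient $3/4$ would indeed follow \emph{if} $G$ were almost triangle-free, but that is precisely the hypothesis that is unavailable when $3\notin K$.
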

\begin{proof}
As $\lvert\mathcal C_4\rvert=O(n^2)=o(n^4)$, $G$ can be made $K_4$-free by removing $o(n^2)$ edges, and we obtain a partition $E=E_0\cup E_1$ with $|E_1|=o(n^2)$ and $E_0$ $K_4$-free, in particular any 4-clique in $G$ contains at most 5 edges from $E_0$. Using the 4-saturation of $G$, we obtain
\[\lvert E_0\rvert\leqslant\sum_{xy\in E_0}\lambda_4(xy)\leqslant 5\lvert\mathcal C_4\rvert,\]
and finally $\lvert E\rvert-\lvert\mathcal C_4\rvert\leqslant (4/5)\lvert E_0\rvert+\lvert E_1\rvert=(4\gamma/5+o(1))n^2$.
\end{proof}
Note that this proof together with Tur\'an's theorem already implies $\lvert E\rvert-\lvert\mathcal C_4\rvert\leqslant(4/15+o(1))n^2$ for $\{2,4\}$-saturated graphs. In order to improve the inequality, we bound the number of triangles. First we have a lower bound in terms of the number of edges. In any graph $G$ with $n$ vertices and $\lvert E\rvert=\gamma n^2$ edges, the number of triangles is at least (see \cite{Fisher1989,Razborov2008}) 
\begin{equation*}
\frac{9\gamma-2-2(1-3\gamma)^{3/2}}{27}n^3.
\end{equation*}
On the other hand we get an upper bound for the number of triangles in terms of the number of 4-cliques.    
\begin{lemma}\label{lem:triples_and_quads}
Let $G$ be a $\{2,4\}$-saturated graph on $n$ vertices. Then the number of triangles is at most
\begin{equation*}
\frac{(n-4)\lvert\mathcal C_4\rvert}{3}+o(n^3).
\end{equation*}
\end{lemma}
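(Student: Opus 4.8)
The plan is to bound the number of triangles that are \emph{not} contained in any $4$-clique, and to count the triangles that lie in $4$-cliques directly, using the fact that $G$ is $\{2,4\}$-saturated. Let me call a triangle \emph{heavy} if it is contained in at least one $4$-clique, and \emph{light} otherwise. Every $4$-clique contains exactly four triangles, so the number of heavy triangles is at most $4\lvert\mathcal C_4\rvert=O(n^2)=o(n^3)$; this is negligible against the claimed bound. So the real task is to show that the number of light triangles is at most $\tfrac{(n-4)\lvert\mathcal C_4\rvert}{3}+o(n^3)$, and in fact I expect to prove the slightly cleaner statement that the number of light triangles is at most $\tfrac{(n-3)\lvert\mathcal C_4\rvert}{3}$ exactly, or something of that form, with the $o(n^3)$ only absorbing the heavy triangles.

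First I would set up the counting. Fix a light triangle $T=\{x,y,z\}$. Since $G$ is $\{2,4\}$-saturated, each of its three edges $xy$, $yz$, $xz$ lies in some $4$-clique. For the edge $xy$, pick a $4$-clique $Q_{xy}=\{x,y,u,v\}$; note $u,v\notin\{z\}$ is automatic, but more importantly, since $T$ itself is light, neither $u$ nor $v$ can equal $z$, and $\{x,y,u,v\}\neq$ any clique through $z$. The key observation I want is a relation of the form: for each light triangle $T$ and each of its edges $e$, there is a $4$-clique $Q_e$ containing $e$ but containing no vertex of $T$ other than the two endpoints of $e$; and moreover distinct (light triangle, edge) pairs cannot be assigned the same $4$-clique too many times. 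I would make this precise by a double-counting / charging argument: charge each light triangle to the triples (or the $4$-cliques) forced by saturation, and bound how many times a given $4$-clique $Q=\{a,b,c,d\}$ can be charged. A $4$-clique $Q$ can receive a charge from a light triangle $T$ sharing an edge $e\subseteq Q$ with it; since $e$ has two endpoints in $Q$ and the third vertex of $T$ is outside $Q$, there are $\binom{4}{2}=6$ choices of $e$ and then at most $n-4$ choices for the third vertex, giving at most $6(n-4)$ charges — but each light triangle is charged (at least) three times, once per edge, so the number of light triangles is at most $\tfrac{6(n-4)}{3}\lvert\mathcal C_4\rvert=2(n-4)\lvert\mathcal C_4\rvert$, which is too weak by a factor of $6$. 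So the crude charging does not suffice and must be refined.

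The main obstacle, therefore, is getting the constant right: I need to exploit that the $4$-clique witnessing saturation of an edge $e$ of a light triangle is highly constrained, not an arbitrary clique through $e$. The refinement I would pursue: for a light triangle $T=\{x,y,z\}$, consider its neighbourhood structure. Saturation of edge $xy$ gives $u,v$ with $\{x,y,u,v\}$ a clique, so $u,v$ are common neighbours of $x$ and $y$; light-ness of $T$ forces $u,v\neq z$ and forces $\{u,v,z\}$ not to be a clique contained in a common $4$-clique with... — I would look for an injection (or a bounded-to-one map) from light triangles into $4$-cliques with a multiplicity gain. One natural route: associate to each light triangle the \emph{pair} (a $4$-clique $Q$, an edge of $Q$) in such a way that each such pair is hit at most once — if an edge $e=\{a,b\}$ of a fixed $Q$ is used by two light triangles $\{a,b,z\}$ and $\{a,b,z'\}$, derive a contradiction or a further $4$-clique to absorb one of them. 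If each (clique, edge) pair is used at most once, the count becomes $\le 6\lvert\mathcal C_4\rvert=o(n^3)$, far stronger than needed — so presumably such pairs \emph{can} repeat, but only $O(n)$ times with a better constant than $n-4$, or the $3$-fold overcount from the edges of $T$ can be improved to a $9$-fold or $18$-fold gain by also rotating the role of which vertex is "outside". I would experiment with counting, for each light triangle, ordered data (vertex $z$ of $T$, $4$-clique through the opposite edge $\{x,y\}$): each light triangle yields $3$ such records (one per choice of $z$), each record involves a $4$-clique and an external vertex, and the per-clique bound should be tightened by noting the external vertex and the two "contact" vertices of the clique play asymmetric roles. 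Closing this constant-chasing gap — showing the bound is $\tfrac{1}{3}(n-4)$ and not $2(n-4)$ — is where the real work lies; everything else (the heavy-triangle bound, the reduction, Turán-type inputs) is routine.
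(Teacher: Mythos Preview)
Your heavy/light split and the charging framework are exactly the right setup, and they match the paper's approach. The gap is precisely the ``constant-chasing'' step you flag as unresolved: you bound the charges to a fixed $4$-clique $Q$ by $6(n-4)$, when the correct bound is $n-4$, and you do not identify the reason.

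The missing observation is this: for a fixed $4$-clique $Q=\{a,b,c,d\}$ and a fixed external vertex $x\notin Q$, there is \emph{at most one} light triangle of the form $\{x,y,z\}$ with $y,z\in Q$. Indeed, two such triangles would force $x$ to be adjacent to at least three vertices of $Q$, say $a,b,c$; but then $\{x,a,b,c\}$ is a $4$-clique, so every triangle on $x$ and two of $a,b,c$ is heavy, a contradiction. In your language, for fixed $Q$ and fixed $x$, at most one of the six edges of $Q$ can serve as the ``contact edge'' of a light triangle through $x$---so your per-clique bound drops from $6(n-4)$ to $n-4$.

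With this in hand, count pairs $(Q,T)$ where $Q\in\mathcal C_4$ and $T$ is a light triangle with $\lvert T\cap Q\rvert=2$: there are at most $(n-4)\lvert\mathcal C_4\rvert$ such pairs. On the other hand, each light triangle $T$ contributes at least three such pairs: each of its three edges lies in some $4$-clique by saturation, and these $4$-cliques are pairwise distinct because $T$ is light (a $4$-clique containing two edges of $T$ would contain all of $T$). Hence $3\cdot\lvert\{\text{light triangles}\}\rvert\leqslant (n-4)\lvert\mathcal C_4\rvert$, which is exactly the bound. Everything else in your outline is fine.
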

\begin{proof}
Let $\mathcal B_3$ denote the set of triangles that are contained in a 4-clique in $G$, and let $\mathcal B'_3$ be the set of all other triangles. Note that $\lvert\mathcal B_3\rvert\leqslant 4\lvert\mathcal C_4\rvert=O(n^2)=o(n^3)$, so the number of triangles is dominated by $\lvert\mathcal B'_3\rvert$. Consider four vertices $a,b,c,d$ inducing a 4-clique in G, and let $x\in V\setminus\{a,b,c,d\}$ be another vertex. There is at most one triangle in $\mathcal B'_3$ of the form $xyz$ with $y,z\in\{a,b,c,d\}$. Consequently, there are at most $n-4$ triangles in $\mathcal B'_3$ having two vertices in $\{a,b,c,d\}$, and by summation over all 4-cliques the claim follows.
\end{proof}
Combining the two bounds for the number of triangles we obtain a second bound for $\lvert E\rvert-\lvert\mathcal C_4\rvert$ in terms of $\gamma$.
\begin{lemma}\label{lem:second_bound}
  Let $G$ be a $\{2,4\}$-saturated graph on $n$ vertices. Then
\begin{equation}\label{eq:second_bound}
\lvert E\rvert-\lvert\mathcal C_4\rvert\leqslant\left(\frac{2+2(1-3\gamma)^{3/2}}{9}+o(1)\right)n^2. 
\end{equation}
\end{lemma}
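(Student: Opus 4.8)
The plan is to combine, essentially mechanically, the two estimates for the number of triangles of $G$ that were just established. Write $T$ for the number of triangles in $G$. On one hand, the Fisher--Razborov bound gives $T\geqslant\tfrac{1}{27}\bigl(9\gamma-2-2(1-3\gamma)^{3/2}\bigr)n^3$; on the other hand, Lemma~\ref{lem:triples_and_quads} gives $T\leqslant\tfrac13(n-4)\lvert\mathcal C_4\rvert+o(n^3)$. Since $\lvert\mathcal C_4\rvert=O(n^2)$, the term $-\tfrac43\lvert\mathcal C_4\rvert$ is $o(n^3)$ and can be absorbed into the error, so chaining the two inequalities yields
\[\frac{9\gamma-2-2(1-3\gamma)^{3/2}}{27}\,n^3\ \leqslant\ \frac{n\,\lvert\mathcal C_4\rvert}{3}+o(n^3).\]

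From here I would divide by $n/3$ to get the lower bound $\lvert\mathcal C_4\rvert\geqslant\tfrac19\bigl(9\gamma-2-2(1-3\gamma)^{3/2}\bigr)n^2-o(n^2)$, and then subtract this from $\lvert E\rvert=\gamma n^2$. Writing $\gamma n^2=\tfrac{9\gamma}{9}n^2$, the $9\gamma$ contributions cancel and one is left with precisely
\[\lvert E\rvert-\lvert\mathcal C_4\rvert\ \leqslant\ \gamma n^2-\frac{9\gamma-2-2(1-3\gamma)^{3/2}}{9}n^2+o(n^2)=\left(\frac{2+2(1-3\gamma)^{3/2}}{9}+o(1)\right)n^2,\]
which is the claim. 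The only genuine care needed is in tracking the $o(\cdot)$ terms through this computation, which is routine.

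One should add a remark about the admissible range of $\gamma$, since the quoted triangle lower bound is only meaningful for $\gamma\geqslant 1/4$ (it is non-positive for $\gamma\leqslant 1/4$) and $(1-3\gamma)^{3/2}$ is real only for $\gamma\leqslant 1/3$. For $\gamma\leqslant 1/4$ the asserted inequality is already trivial, because $\lvert E\rvert-\lvert\mathcal C_4\rvert\leqslant\lvert E\rvert=\gamma n^2\leqslant\tfrac{2+2(1-3\gamma)^{3/2}}{9}n^2$ in that range (with equality at $\gamma=1/4$), so it suffices to run the argument above for $1/4\leqslant\gamma\leqslant 1/3$ — which is anyway the only regime in which this second bound improves on Lemma~\ref{lem:first_bound}. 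I do not expect any real obstacle in this proof: the substantive work sits in Lemma~\ref{lem:triples_and_quads} and in the cited lower bound for the triangle count, and what remains is a one-line algebraic combination plus this bookkeeping about the range of~$\gamma$.
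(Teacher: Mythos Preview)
Your proposal is correct and matches the paper's approach exactly: the paper does not write out a separate proof of this lemma but simply says it follows by ``combining the two bounds for the number of triangles,'' and what you have written is precisely that combination with the algebra made explicit. Your remarks on the admissible range of $\gamma$ and the implicit assumption $\lvert\mathcal C_4\rvert=O(n^2)$ (trivially enforced since otherwise $\lvert E\rvert-\lvert\mathcal C_4\rvert\leqslant 0$) are, if anything, more careful than the paper itself.
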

Now we can combine (\ref{eq:first_bound}) and (\ref{eq:second_bound}) to eliminate $\gamma$ and obtain an absolute bound for $\lvert E\rvert-\lvert\mathcal C_4\rvert$.  
\begin{theorem}\label{thm:weak_max_asymptotics}
Let $G$ be a $\{2,4\}$-saturated graph on $n$ vertices. Then
\[\lvert E\rvert-\lvert\mathcal C_4\rvert\leqslant\left(\frac{2(39+\sqrt{21})}{375}+o(1)\right)n^2<(0.232441+o(1))n^2.\]
\end{theorem}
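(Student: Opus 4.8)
The plan is to combine the two bounds from Lemma~\ref{lem:first_bound} and Lemma~\ref{lem:second_bound} by treating $\gamma=\lvert E\rvert/n^2$ as a free parameter and optimizing. Inequality~(\ref{eq:first_bound}) gives an upper bound for $\lvert E\rvert-\lvert\mathcal C_4\rvert$ that is increasing in $\gamma$, namely $(4\gamma/5+o(1))n^2$, while inequality~(\ref{eq:second_bound}) gives an upper bound $\bigl(\tfrac{2+2(1-3\gamma)^{3/2}}{9}+o(1)\bigr)n^2$ that is decreasing in $\gamma$ (on the relevant range $\tfrac14\leqslant\gamma\leqslant\tfrac13$; note $\gamma>1/3$ is impossible in a graph with $\binom n2$ edges available, and anyway the second bound is only meaningful for $\gamma\leqslant 1/3$). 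Since both inequalities hold simultaneously for the optimal graph, the quantity $\lvert E\rvert-\lvert\mathcal C_4\rvert$ is at most the value of the upper envelope, and the worst case is where the two curves cross. So the first step is simply to set
\[
\frac{4\gamma}{5}=\frac{2+2(1-3\gamma)^{3/2}}{9}
\]
and solve for $\gamma$.

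Solving this is the only real computation. Writing $u=(1-3\gamma)^{1/2}$, so that $\gamma=(1-u^2)/3$, the equation becomes $\tfrac{4}{15}(1-u^2)=\tfrac{2}{9}(1+u^3)$, i.e. $12(1-u^2)=10(1+u^3)$ after clearing denominators, i.e. $10u^3+12u^2-2=0$, i.e. $5u^3+6u^2-1=0$. One checks that $u=-1$ is a root, so $5u^3+6u^2-1=(u+1)(5u^2+u-1)$, and the relevant root in $[0,1/\sqrt2]$ is $u=\tfrac{-1+\sqrt{21}}{10}$. Then $\gamma=(1-u^2)/3$ and the common value of the two bounds is $\tfrac{4\gamma}{5}$; substituting and simplifying should yield $\tfrac{2(39+\sqrt{21})}{375}$. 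A quick sanity check: $\sqrt{21}\approx4.583$, so this is about $2\cdot43.583/375\approx0.23244$, matching the claimed numerical bound, and the corresponding $\gamma$ is comfortably between $1/4$ and $1/3$, so the regime where both lemmas apply is the right one.

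To organize the argument cleanly I would phrase it as: for any $\{2,4\}$-saturated graph $G$, both~(\ref{eq:first_bound}) and~(\ref{eq:second_bound}) hold, hence
\[
\lvert E\rvert-\lvert\mathcal C_4\rvert\leqslant\min\left\{\frac{4\gamma}{5},\ \frac{2+2(1-3\gamma)^{3/2}}{9}\right\}n^2+o(n^2)\leqslant\max_{0\leqslant\gamma\leqslant 1/3}\min\left\{\frac{4\gamma}{5},\ \frac{2+2(1-3\gamma)^{3/2}}{9}\right\}n^2+o(n^2),
\]
and then evaluate the max–min. Because $\tfrac{4\gamma}{5}$ is strictly increasing and $\tfrac{2+2(1-3\gamma)^{3/2}}{9}$ strictly decreasing, the inner minimum is maximized exactly at the crossing point computed above, giving the stated constant. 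One should also note that $\gamma\leqslant 1/3$ can be assumed without loss of generality: if $\gamma>1/3$ then $1-3\gamma<0$ and the Fisher–Razborov lower bound on triangles is vacuous, but in that range one can instead invoke Lemma~\ref{lem:second_bound} at $\gamma=1/3$ (or simply observe that by Kruskal–Katona / supersaturation the number of triangles already forces $\lvert\mathcal C_4\rvert$ large enough) — in any case the maximum of the envelope over $\gamma\in[1/4,1/3]$ is what matters and it is attained strictly inside.

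I do not expect a genuine obstacle here; the theorem is a routine consequence of the two preceding lemmas, and the only thing requiring care is the algebra of the cubic $5u^3+6u^2-1=0$ and the back-substitution to confirm the closed form $\tfrac{2(39+\sqrt{21})}{375}$, together with checking that the numerics give $<0.232441$. If anything is delicate it is making sure the $o(n^2)$ error terms from the two lemmas (each ultimately coming from the graph removal lemma) are absorbed uniformly, but since $\gamma$ ranges over a fixed compact interval and both bounds are continuous in $\gamma$, a single $o(n^2)$ suffices and there is no issue.
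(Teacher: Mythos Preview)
Your approach is exactly the paper's: equate the increasing bound~(\ref{eq:first_bound}) and the decreasing bound~(\ref{eq:second_bound}), solve for $\gamma$, and read off the value; the paper compresses the algebra into one line (``solving for $\gamma$ yields $\gamma=(39+\sqrt{21})/150$''), while you spell out the substitution $u=(1-3\gamma)^{1/2}$ and the factorization of $5u^3+6u^2-1$. One small slip: $\gamma>1/3$ is \emph{not} ruled out merely because $\lvert E\rvert\leqslant\binom{n}{2}$ (that only gives $\gamma<1/2$); the correct reason $\gamma\leqslant 1/3+o(1)$ is that an optimal $\{2,4\}$-saturated graph has $o(n^4)$ copies of $K_4$ and hence is almost $K_4$-free by the removal lemma, so Tur\'an gives $\lvert E\rvert\leqslant n^2/3+o(n^2)$---but as you note, this is immaterial since the crossing point lies strictly inside $(1/4,1/3)$.
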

\begin{proof}
The right hand sides of (\ref{eq:first_bound}) and (\ref{eq:second_bound}) are increasing and decreasing in $\gamma$, respectively. Equating them and solving for $\gamma$ yields $\gamma=(39+\sqrt{21})/150$ and the result follows. 
\end{proof}
\begin{corollary}
Let $\mathcal A$ be a maximal $\{2,4\}$-antichain of minimum size on $n$ points. Then
\[\left(\frac{219-4\sqrt{21}}{750}+o(1)\right)n^2\leqslant\lvert\mathcal A\rvert\leqslant\begin{cases}
\left\lceil (5n^2-16n)/16\right\rceil & \text{if $n$ is even,}\\
\left\lfloor(5n^2-14n)/16\right\rfloor & \text{if $n$ is odd.}\\
\end{cases}\]  
\end{corollary}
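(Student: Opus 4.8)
The plan is to read off both bounds from the graph-theoretic reformulation of Section~\ref{sec:graph}, using Theorem~\ref{thm:weak_max_asymptotics} for the lower bound and the construction of Section~\ref{subsec:l_4} for the upper bound.

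First I would make the relation between $\lvert\mathcal A\rvert$ and $\lvert E\rvert-\lvert\mathcal C_4\rvert$ explicit. As already noted in the text, for $\lvert K\rvert=2$ the family $\mathfrak A(G)$ attached to a $\{2,4\}$-saturated graph $G$ is a singleton, namely $\mathcal A(G)$: every $4$-clique of $G$ must be contained in some member of $\mathcal A$, and since a $2$-subset of a $4$-clique is an edge of $G$ and hence not in $\mathcal A_2=\binom{[n]}{2}\setminus E$, each $4$-clique must itself lie in $\mathcal A_4$, while nothing else does. Hence $\mathcal A_4=\mathcal C_4$ and, for the graph $G=G(\mathcal A)$ of any maximal $\{2,4\}$-antichain $\mathcal A$,
\[
\lvert\mathcal A\rvert=\Bigl\lvert\binom{[n]}{2}\setminus E\Bigr\rvert+\lvert\mathcal C_4\rvert=\binom{n}{2}-\bigl(\lvert E\rvert-\lvert\mathcal C_4\rvert\bigr).
\]
Minimizing $\lvert\mathcal A\rvert$ over all maximal $\{2,4\}$-antichains is thus the same as maximizing $\lvert E\rvert-\lvert\mathcal C_4\rvert$ over all $\{2,4\}$-saturated graphs, which is exactly~(\ref{eq:reformulation}).

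For the lower bound I would substitute Theorem~\ref{thm:weak_max_asymptotics} into this identity: every maximal $\{2,4\}$-antichain satisfies
\[
\lvert\mathcal A\rvert\geqslant\binom{n}{2}-\Bigl(\tfrac{2(39+\sqrt{21})}{375}+o(1)\Bigr)n^2=\Bigl(\tfrac12-\tfrac{2(39+\sqrt{21})}{375}+o(1)\Bigr)n^2,
\]
and a one-line simplification, $\tfrac12-\tfrac{2(39+\sqrt{21})}{375}=\tfrac{375-4(39+\sqrt{21})}{750}=\tfrac{219-4\sqrt{21}}{750}$, gives the claimed constant; in particular this bounds the minimum-size antichain from below.

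For the upper bound I would use the $\{2,4\}$-saturated graphs built in Section~\ref{subsec:l_4}, which are $\{2,4\}$-sparse and hence arise as $G(\mathcal A)$ for the maximal $\{2,4\}$-antichain $\mathcal A=\mathcal A(G)$; the identity above converts the four values of $\lvert E\rvert-\lvert\mathcal C_4\rvert$ listed there (for $n\equiv 0,1,2,3\bmod 4$) into the corresponding values of $\lvert\mathcal A\rvert$. It then remains to check, residue class by residue class, that $\binom{n}{2}$ minus each of $(3n^2+8n)/16$, $(3n^2+6n+7)/16$, $(3n^2+8n-12)/16$ and $(3n^2+6n+5)/16$ equals $\lceil(5n^2-16n)/16\rceil$ for even $n$ and $\lfloor(5n^2-14n)/16\rfloor$ for odd $n$; writing $n=4m+r$ reduces each case to a polynomial identity in $m$ together with a verification that the indicated rounding is the correct one. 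This rounding bookkeeping is the only mildly tedious point, and there is no real obstacle: all the substance already lies in Theorem~\ref{thm:weak_max_asymptotics} and in the construction of Section~\ref{subsec:l_4}.
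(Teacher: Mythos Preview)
Your proposal is correct and follows exactly the paper's approach: the lower bound is obtained from Theorem~\ref{thm:weak_max_asymptotics} via the identity $\lvert\mathcal A\rvert=\binom{n}{2}-(\lvert E\rvert-\lvert\mathcal C_4\rvert)$, and the upper bound from the explicit constructions in Section~\ref{subsec:l_4}. Your write-up simply spells out the arithmetic that the paper leaves implicit.
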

\begin{proof}
  The lower bound is a consequence of Theorem \ref{thm:weak_max_asymptotics}, and the upper bound comes from the construction in Section \ref{subsec:l_4}.
\end{proof}


\section{Open problems}\label{sec:problems}
An obvious next step is to improve the asymptotic bounds with the aim to replace the coefficient of $n^2$ in Theorem \ref{thm:weak_max_asymptotics} by $3/16$, which gives the leading term of the inequality in Conjecture \ref{conj:graph_conjecture}. It seems that arguments based on the removal lemma are not sufficient to reach this goal. One reason for this failure could be that the removal lemma only requires that there are only $o(n^4)$ 4-cliques, while in our situation we can use the stronger information that the numbers 4-cliques is $O(n^2)$. Another problem is to characterize the extremal graphs (resp. antichains) up to isomorphism. For the case $K=\{2,3\}$ this was done in \cite{GruettmuellerHartmannKalinowskiLeckRoberts2009}, and it would be nice to prove that for $K\in\left\{\{2,4\},\,\{2,3,4\}\right\}$ and large $n$ (some slight variations of) the constructions in Section \ref{subsec:l_4} yield all extremal graphs. In view of the computational results in Table \ref{tab:small_cases} one might also study the question if any maximal $K$-antichain of minimum size contains only $k$-sets and $l$-sets, where $k$ and $l$ are the smallest and the largest element of $K$, respectively.


\end{document}